\newtheorem{theorem}{Theorem}
\newtheorem{corollary}{Corollary}
\newtheorem{definition}{Definition}
\newtheorem{example}{Example}
\newtheorem{lemma}{Lemma}
\newtheorem{proposition}{Proposition}
\newtheorem{remark}{Remark}
\newproof{proof}{Proof}
\numberwithin{equation}{section}
\journal{arxiv.org}
\begin{document}

\begin{frontmatter}



\title{Halanay type inequalities on time scales with applications}


\author[MA]{Murat Ad\i var \corref{cor1}}
\cortext[cor1]{Corresponding Author}
\address[MA]{Izmir University of Economics\\
Department of Mathematics, 35330, Izmir, Turkey}
\ead{murat.adivar@ieu.edu.tr}
\author[EB]{Elvan Ak\i n Bohner}
\address[EB]{Missouri University Science \& Technology \\ Department of Mathematics \&
Statistics, Rolla, MO, 65409-0020, USA}
\ead{akine@mst.edu}

\begin{abstract}
This paper aims to introduce Halanay type inequalities on time scales. By
means of these inequalities we derive new global stability conditions for
nonlinear dynamic equations on time scales. Giving several examples we show
that beside generalization and extension to $q$-difference case, our results
also provide improvements for the existing theory regarding differential and
difference inequalites, which are the most important particular cases of
dynamic inequalities on time scales.

\end{abstract}

\begin{keyword}
Delay dynamic equation \sep Dynamic inequality \sep Global stability \sep Halanay
inequality \sep Shift operator \sep Time scales
\MSC[2010] Primary 34N05 \sep 34A40 \sep Secondary 34D20 \sep 39A13.

\end{keyword}

\end{frontmatter}


\section{Introduction and preliminaries}

Stability analysis of dynamical systems using differential and difference
inequalities attracted a prominent attention in the existing literature (see
\cite{agarwal 1}-\cite{gopalsamy} and references therein). For stability
analysis of the delay differential equation%
\[
x^{\prime}(t)=-px(t)+qx(t-\tau),\ \ \tau>0,
\]
Halanay proved the following result.

\begin{lemma}
[Halanay, 1966]\cite{halanay}\label{lem halanay} If
\[
f^{\prime}(t)\leq-\alpha f(t)+\beta\sup_{s\in\left[  t-\tau,t\right]
}f(s)\ \ \text{for\ \ }t\geq t_{0}%
\]
and $\alpha>\beta>0$, then there exist $\gamma>0$ and $K>0$ such that
\[
f(t)\leq Ke^{-\gamma(t-t_{0})}\ \ \text{for\ \ }t\geq t_{0}.
\]

\end{lemma}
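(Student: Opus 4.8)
The plan is to produce the exponential majorant explicitly and then verify it by a comparison (first-crossing) argument. First I would fix the decay rate $\gamma$. Consider the auxiliary function $G(\lambda)=\alpha-\lambda-\beta e^{\lambda\tau}$ on $[0,\infty)$. Since $G(0)=\alpha-\beta>0$ by hypothesis, $G$ is continuous and strictly decreasing, and $G(\lambda)\to-\infty$ as $\lambda\to\infty$, there is a unique root $\gamma^{\ast}>0$ with $G(\gamma)>0$ for every $\gamma\in(0,\gamma^{\ast})$. I would fix any such $\gamma$; the key inequality I will use throughout is
\[
\beta e^{\gamma\tau}<\alpha-\gamma,\qquad\text{equivalently}\qquad -\alpha+\beta e^{\gamma\tau}<-\gamma .
\]

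Next I would build the candidate majorant. Set $K=\max\{\sup_{s\in[t_{0}-\tau,t_{0}]}f(s),\,0\}\ge 0$, fix $\varepsilon>0$, and define $\phi(t)=(K+\varepsilon)e^{-\gamma(t-t_{0})}$. On the initial interval $[t_{0}-\tau,t_{0}]$ one has $e^{-\gamma(s-t_{0})}\ge1$, hence $f(s)\le K<K+\varepsilon\le\phi(s)$, so the strict inequality $f<\phi$ holds there.

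The core step is the comparison, which I would run by contradiction. Suppose the set $\{t>t_{0}:f(t)>\phi(t)\}$ is nonempty; by continuity of $f$ it is open, so it has an infimum $t_{1}$ with $t_{1}>t_{0}$ (strictly, because $f(t_{0})<\phi(t_{0})$), with $f(t)\le\phi(t)$ on $[t_{0}-\tau,t_{1}]$ and $f(t_{1})=\phi(t_{1})$. Since $\phi-f$ attains a minimum at $t_{1}$ from the left, $f'(t_{1})\ge\phi'(t_{1})$. On the other hand, because $\phi$ is decreasing its supremum over $[t_{1}-\tau,t_{1}]$ is $\phi(t_{1}-\tau)=(K+\varepsilon)e^{-\gamma(t_{1}-\tau-t_{0})}$, so the hypothesis gives
\[
f'(t_{1})\le-\alpha f(t_{1})+\beta\sup_{s\in[t_{1}-\tau,t_{1}]}f(s)\le-\alpha\phi(t_{1})+\beta\phi(t_{1}-\tau)=(K+\varepsilon)e^{-\gamma(t_{1}-t_{0})}\bigl(-\alpha+\beta e^{\gamma\tau}\bigr).
\]
By the key inequality $-\alpha+\beta e^{\gamma\tau}<-\gamma$, the right-hand side is strictly less than $-\gamma(K+\varepsilon)e^{-\gamma(t_{1}-t_{0})}=\phi'(t_{1})$, contradicting $f'(t_{1})\ge\phi'(t_{1})$. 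Hence $f(t)<\phi(t)$ for all $t\ge t_{0}$, and letting $\varepsilon\to0$ yields $f(t)\le Ke^{-\gamma(t-t_{0})}$.

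The main obstacle I anticipate is making the first-crossing argument airtight: one must justify that $\{f>\phi\}$ is open, that its infimum $t_{1}$ is a genuine tangency point where $f'(t_{1})\ge\phi'(t_{1})$, and that $t_{1}>t_{0}$ strictly — which is precisely why I introduce the slack $\varepsilon>0$ and the truncation $K\ge0$ rather than working with $\sup f$ directly. The selection of $\gamma$ via the transcendental function $G$ is routine once monotonicity and the sign $G(0)>0$ are noted, but it is the step that actually consumes the hypothesis $\alpha>\beta>0$.
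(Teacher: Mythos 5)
Your proof is correct. Note first that the paper itself offers no proof of Lemma \ref{lem halanay}: it is quoted verbatim from Halanay's 1966 book as motivation, so there is no in-paper argument for this exact statement to compare against. The closest thing the paper proves is Theorem \ref{thm new halanay}, which reduces to Lemma \ref{lem halanay} when $\mathbb{T}=\mathbb{R}$ and $\ell=1$ (Remark \ref{rem imp 2}), and the route there is genuinely different from yours in its organization. The authors first manufacture an exact exponential solution $Me_{\widetilde\lambda}(t,t_0)$ of the associated comparison \emph{equation} by locating a root $\widetilde\lambda(t)$ of a characteristic polynomial $\widetilde P(t,\lambda)$ via an intermediate-value argument in $\lambda$, and only then transfer the bound to solutions of the \emph{inequality} through the general comparison results, Propositions \ref{pro1} and \ref{pro2} (the latter passing through an $\varepsilon_n$-perturbed family of equations). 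Your choice of $\gamma$ as a sub-root of $G(\lambda)=\alpha-\lambda-\beta e^{\lambda\tau}$ is precisely the constant-coefficient specialization of their characteristic equation, and your first-crossing contradiction at $t_1$ is essentially the left-dense case of the argument inside Proposition \ref{pro1}; you simply fuse the two stages into one direct estimate against the explicit majorant $\phi$. What your version buys is a short, self-contained, elementary proof with the $\varepsilon$-slack making the strict initial inequality and the tangency analysis airtight; what the paper's two-stage version buys is uniformity over time scales and over variable coefficients $p(t),q(t)$, where an explicit closed-form rate is unavailable and the rate must be defined pointwise as in (\ref{lambda tilda}). The only cosmetic issue in your write-up is that the lemma asserts $K>0$ while your truncation allows $K=0$; in that degenerate case $f\le 0$ and the stated bound holds with any positive constant, so nothing is lost.
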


In 2000, Mohamad and Gopalsamy gave the next theorem:

\begin{theorem}
\cite{gopalsamy}\label{gopalsamy} Let $x$ be a nonnegative function
satisfying
\[
x^{\prime}(t)\leq-a(t)x(t)+b(t)\left(  \sup_{s\in\left[  t-\tau(t),t\right]
}x(s)\right)  \text{, }t\geq t_{0}%
\]%
\[
x(s)=\left\vert \varphi(s)\right\vert \text{ for }s\in\left[  t_{0}-\tau
^{\ast},t_{0}\right]  ,
\]
where $\tau(t)$ denotes a nonnegative continuous and bounded function defined
for $t\in\mathbb{R}$ and $\tau^{\ast}=\sup\limits_{t\in\mathbb{R}}\tau(t)$;
$\varphi(s)$ is continuous and defined on $\left[  t_{0}-\tau^{\ast}%
,t_{0}\right]  $; $a(t)$ and $b(t)$, $t\in\mathbb{R}$, denote nonnegative,
continuous and bounded functions. Suppose%
\[
a(t)-b(t)\geq L\text{, }t\in\mathbb{R}\text{,}%
\]
where $L=\inf\limits_{t\in\mathbb{R}}\left(  a(t)-b(t)\right)  >0$. Then there
exists a positive number $\lambda$ such that%
\[
x(t)\leq\left(  \sup_{s\in\left[  t_{0}-\tau^{\ast},t_{0}\right]
}x(s)\right)  e^{-\lambda(t-t_{0})}\text{ for }t>t_{0}\text{.}%
\]

\end{theorem}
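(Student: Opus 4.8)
The plan is to run a comparison argument that traps $x$ beneath an exponentially decaying envelope, extracting the decay rate $\lambda$ from the uniform sign condition $a(t)-b(t)\ge L>0$. Write $M=\sup_{s\in[t_0-\tau^{\ast},t_0]}x(s)$, so that the asserted bound is exactly $x(t)\le Me^{-\lambda(t-t_0)}$. The first task is to \emph{choose} $\lambda$. I would look at the auxiliary quantity $h(\lambda,t):=\lambda-a(t)+b(t)e^{\lambda\tau(t)}$ and note that $h(0,t)=-(a(t)-b(t))\le-L<0$. Using that $a,b$ are bounded (say $b(t)\le B$) and $\tau(t)\le\tau^{\ast}$, I would estimate $h(\lambda,t)\le -L+\lambda+B(e^{\lambda\tau^{\ast}}-1)$, whose right-hand side tends to $-L$ as $\lambda\to0^{+}$. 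Hence there is a $\lambda>0$, \emph{independent of $t$}, with $h(\lambda,t)<0$ for every $t$; I fix such a $\lambda$.

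The core of the argument is a first-crossing contradiction. For fixed $\varepsilon>0$ set $u(t)=(M+\varepsilon)e^{-\lambda(t-t_0)}$. On the initial interval $[t_0-\tau^{\ast},t_0]$ one has $x(t)\le M<M+\varepsilon\le u(t)$, since $e^{-\lambda(t-t_0)}\ge1$ there. I claim $x(t)<u(t)$ for all $t\ge t_0$. If not, continuity of $x$ and $u$ gives a first time $t_1>t_0$ with $x(t_1)=u(t_1)$ and $x(t)<u(t)$ for $t_0-\tau^{\ast}\le t<t_1$; consequently the left derivative satisfies $x'(t_1)\ge u'(t_1)=-\lambda u(t_1)$. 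Because $u$ is decreasing, the delayed supremum is controlled at the left endpoint of the window:
\[
\sup_{s\in[t_1-\tau(t_1),t_1]}x(s)\le\sup_{s\in[t_1-\tau(t_1),t_1]}u(s)=u(t_1-\tau(t_1))=u(t_1)e^{\lambda\tau(t_1)}.
\]
Feeding this and $x(t_1)=u(t_1)$ into the hypothesis gives $x'(t_1)\le u(t_1)\big(-a(t_1)+b(t_1)e^{\lambda\tau(t_1)}\big)$, so combining with $x'(t_1)\ge-\lambda u(t_1)$ and dividing by $u(t_1)>0$ yields $h(\lambda,t_1)\ge0$, contradicting the choice of $\lambda$. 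Thus $x(t)<(M+\varepsilon)e^{-\lambda(t-t_0)}$ for all $t\ge t_0$, and letting $\varepsilon\to0^{+}$ produces the desired estimate.

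I expect the main obstacle to be the $t$-uniform selection of $\lambda$: the strict inequality $h(\lambda,t)<0$ must hold simultaneously for all $t$, which is precisely where boundedness of $a$, $b$, and $\tau$ together with $L>0$ is indispensable. The secondary subtlety is the delayed-supremum step, where monotonicity of the envelope $u$ is what converts $\sup_{s}x(s)$ into the clean factor $u(t_1)e^{\lambda\tau(t_1)}$; one must also be careful that the window $[t_1-\tau(t_1),t_1]$ stays inside the region $[t_0-\tau^{\ast},t_1]$ where the strict bound $x<u$ is already known, and that the one-sided derivative comparison at the crossing point $t_1$ is legitimate.
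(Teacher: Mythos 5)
Your argument is correct, but note that the paper does not prove this statement at all: it is quoted verbatim from Mohamad and Gopalsamy as background (Theorem~\ref{gopalsamy} carries only the citation \cite{gopalsamy}), so there is no in-paper proof to compare against. Your proof is the standard one and it holds up: the uniform choice of $\lambda$ via $h(\lambda,t)=\lambda-a(t)+b(t)e^{\lambda\tau(t)}\le\lambda-L+B\bigl(e^{\lambda\tau^{\ast}}-1\bigr)$ is exactly where boundedness of $b$ and $\tau$ enters; the first-crossing point $t_{1}$ exists because the bad set is closed and bounded below; the window $[t_{1}-\tau(t_{1}),t_{1}]$ does lie inside $[t_{0}-\tau^{\ast},t_{1}]$ since $t_{1}>t_{0}$ and $\tau(t_{1})\le\tau^{\ast}$; and the left-difference-quotient comparison at $t_{1}$ gives $x'(t_{1})\ge u'(t_{1})$ legitimately because $x'$ is assumed to exist in the hypothesis. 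It is worth observing that your scheme is precisely the continuous-time shadow of the machinery the paper builds for time scales: Proposition~\ref{pro1} is the comparison/first-crossing lemma (with the extra case analysis needed when the infimum of the bad set is left-scattered, a complication absent for $\mathbb{T}=\mathbb{R}$), and Theorem~\ref{thm1} selects the decay rate as the largest root of a characteristic polynomial $P(t,k)$ satisfying $P(t,0)>0$ and $P(t,k)<0$ near the left end of $S(t)$ --- the exact analogue of your condition $h(\lambda,t)<0$ with $h(0,t)\le -L<0$. The one cosmetic difference is that the paper works through an exact comparison solution $Ke_{\lambda}(t,t_{0})$ of the associated equation and invokes Proposition~\ref{pro2}, whereas you perturb by $\varepsilon$ and let $\varepsilon\to0^{+}$; both devices serve the same purpose of turning a non-strict differential inequality into a strict barrier.
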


Afterwards, numerous variants of Halanay's inequality have been treated in the
literature. Stability analysis of differential equations using Halanay type
inequalities has been studied in \cite{halanay}, \cite{Ivanov}, and \cite{wang}. For stability
analysis of difference equations using Halanay inequality one may consult with
\cite{agarwal}-\cite{liz}. A comprehensive review on the recent developments
in discrete and continuous Halanay type inequalities can be found in
\cite{baker} and \cite{baker tang}. A time scale is an arbitrary nonempty
closed subset of reals. Stability analysis of dynamics equations on time
scales using Lyapunov functionals has been studied in \cite{adivar1}%
-\cite{raffoul2}. To the best of our knowledge, Halanay type inequalities on
time scales and stability analysis using them have not been investigated
elsewhere before this study. One of the aims of this paper is to fill this gap
and show how Halanay inequalities on time scales can be used for the stability
analysis of dynamic equations.

In this paper, we employ the shift operators $\delta_{\pm}$ to construct delay
dynamic inequalities on time scales. Using these dynamic inequalities we
derive Halanay type inequalities for dynamic equations on time scales. By
means of Halanay inequalities and the properties of exponential function on
time scales (see Lemma \ref{remark osc}) we propose new conditions that lead
to stability for nonlinear dynamic equations on time scales. Main contribution
of this paper can be outlined as follows:

\begin{itemize}
\item Construction of Halanay type inequalities on time scales,

\item Investigation of global stability of delay dynamic equations on time
scales using Halanay inequality,

\item Improvement of the existing results for differential and difference
equations which are the most important particular cases of our problem (we
highlight this improvement by Remarks \ref{rem imp1}, \ref{rem imp 2}, and
\ref{rem imp 3}).
\end{itemize}

In \cite{yang cao}, Halanay inequalities are used to derive sufficient
conditions for the existence of periodic solutions of delayed cellular neural
networks with impulsive effects. Motivated by the study \cite{yang cao}, we
note that the results obtained in this paper can also be employed in another
research regarding the derivation of sufficient conditions for the existence
of (uniformly asymptotically stable) periodic solutions of some nonlinear
scalar systems on time scales.

We organize the paper as follows: First and second sections are devoted to
preliminary results of theory of time scales and shift operators on time
scales, respectively. In the third section, we use the shift operators on time
scales to construct delay functions and a general form of delay dynamic
equations, and obtain some dynamic inequalities. We finalize our study by
providing sufficient conditions for stability of nonlinear dynamic equations
on time scales.

Hereafter, we give some basic results that will be used in our further analysis.

To indicate a time scale (a nonempty closed subset of reals) we use the
notation $\mathbb{T}$. We classify the points of a time scale $\mathbb{T}$ by
using the forward jump and backward jump operators defined by%
\begin{equation}
\sigma(t):=\inf\left\{  s\in\mathbb{T}:s>t\right\}  \label{sigma}%
\end{equation}
and%
\[
\rho(t):=\sup\left\{  s\in\mathbb{T}:s<t\right\}  ,
\]
respectively. A point $t$ in $\mathbb{T}$ is said to be right-scattered
(right-dense) if $\sigma(t)>t$ ($\sigma(t)=t)$. We say $t\in\mathbb{T}$ is
left-scattered (left-dense) if $\rho(t)<t$ ($\rho(t)=t$). If $\rho
(t)<t<\sigma(t)$, then $t\in\mathbb{T}$ is called isolated point. The set
$\mathbb{T}^{\kappa}$ is derived from the time scale $\mathbb{T}$ as follows:
If $\mathbb{T}$ has a left-scattered maximum $m$, then $\mathbb{T}^{\kappa
}=\mathbb{T-}\left\{  m\right\}  $. Otherwise $\mathbb{T}^{\kappa}=\mathbb{T}%
$. The delta derivative of a function $f:\mathbb{T\rightarrow R}$, defined at
a point $t\in\mathbb{T}^{\kappa}$ by%
\begin{equation}
f^{\Delta}(t):=\lim_{\substack{s\rightarrow t\\s\neq\sigma(t)}}\frac
{f(\sigma(t))-f(s)}{\sigma(t)-s}\text{,} \label{delta derivative}%
\end{equation}
was first introduced by Hilger \cite{hilger} to unify discrete and continuous
analyses. It follows from the definition of the operator $\sigma$ that%
\begin{equation}
\sigma(t)=\left\{
\begin{array}
[c]{cc}%
t & \text{if }\mathbb{T=R}\\
t+1 & \text{if }\mathbb{T=Z}\\
qt & \text{if }\mathbb{T=}\overline{q^{\mathbb{Z}}}\\
t+h & \text{if }\mathbb{T=}h\mathbb{Z}%
\end{array}
\right.  , \label{forward jump}%
\end{equation}
where $\overline{q^{\mathbb{Z}}}=\{q^{k}:k\in\mathbb{Z\ }$and\ $q>1\}\cup
\left\{  0\right\}  $ and $h\mathbb{Z=\{}hn:n\in\mathbb{Z}$ and
$h>0\mathbb{\}}$. Hence, the delta derivative $f^{\Delta}(t)$ turns into
ordinary derivative $f^{\prime}(t)$ if $\mathbb{T}=\mathbb{R}$ and it becomes
the forward $h-$difference operator $\Delta_{h}f(t):=\frac{1}{h}[f(t+h)-f(t)]$
whenever $\mathbb{T}=h\mathbb{Z}$ (i.e. $f^{\Delta}(t)=f(t+1)-f(t)$=$\Delta
f(t)$ if $h=1$). For the time scale $\mathbb{T=}\overline{q^{\mathbb{Z}}}$ we
have $f^{\Delta}(t)=D_{q}f(t)$, where
\begin{equation}
D_{q}f(t)=\frac{f(qt)-f(t)}{(q-1)t}. \label{q derivative}%
\end{equation}

It follows from (\ref{delta derivative}) and (\ref{forward jump}) that dynamic
equations on time scales turn into difference equations when the time scale is
chosen as the set of integers, and they become differential equations when the
time scale coincides with the set of reals. Moreover, $q$-difference,
$h$-difference equations, used in the discretization of differential
equations, are all particular cases of dynamic equations on time scales. Since
there are many time scales other than the sets of reals and integers, analysis
on time scales provides a more general theory which enables us to see
similarities and differences between the analyses on discrete and continuous
time domains.

Throughout the paper, we denote by $[a,b]_{\mathbb{T}}$ the closed time scale
interval $[a,b]\cap\mathbb{T}$. The other time scale intervals
$[a,b)_{\mathbb{T}}$, $(a,b]_{\mathbb{T}}$, and $(a,b)_{\mathbb{T}}$ are
defined similarly. A function $f:\mathbb{T}\rightarrow\mathbb{R}$ is called
$rd$-continuous if it is continuous at right dense points and its left sided
limits exists (finite) at left dense points. The set of $rd$-continuous
functions $f:\mathbb{T\rightarrow R}$ is denoted by $C_{rd}=C_{rd}%
(\mathbb{T)}$. It is known by \cite[Theorem 1.60]{book} that the forward jump
operator defined by (\ref{sigma}) is an $rd$-continuous. By \cite[Theorem
1.65]{book} it is concluded that every $rd$-continuous function is bounded on
a compact interval. Note that continuity implies $rd$-continuity. Every
$rd$-continuous function $f:\mathbb{T}\rightarrow\mathbb{R}$ has an
anti-derivative%
\[
F(t)=\int_{t_{0}}^{t}f(t)\Delta t.
\]
That is, $F^{\Delta}(t)=f(t)$ for all $t\in\mathbb{T}^{\kappa}$ (see
\cite[Theorem 1.74]{book2}). For an excellent review on $\Delta$-derivative
and $\Delta$-Riemann integral we refer the reader to \cite{book}.

Hereafter, we give some basic definitions and theorems that will be used in
further sections.

\begin{definition}
A function $h:\mathbb{T}\rightarrow\mathbb{R}$ is said to be \emph{regressive}
provided $1+\mu(t)h(t)\neq0$ for all $t\in\mathbb{T}^{\kappa}$, where
$\mu(t)=\sigma(t)-t$. The set of all regressive $rd$-continuous functions
$\varphi:\mathbb{T}\rightarrow\mathbb{R}$ is denoted by $\mathcal{R}$ while
the set $\mathcal{R}^{+}$ is given by $\mathcal{R}^{+}=\{h\in\mathcal{R}%
:1+\mu(t)\varphi(t)>0\mbox{
for all }t\in\mathbb{T}\}$.
\end{definition}

Let $\varphi\in\mathcal{R}$. The \emph{exponential function} on $\mathbb{T}$
is defined by
\begin{equation}
e_{\varphi}(t,s)=\exp\left(  \int_{s}^{t}\!\zeta_{\mu(r)}(\varphi(r))\Delta
r\right)  \label{exp}%
\end{equation}
where $\zeta_{\mu(s)}$ is the cylinder transformation given by
\begin{equation}
\zeta_{\mu(r)}(\varphi(r))\!:=\left\{
\begin{array}
[c]{cc}%
\frac{1}{\mu(r)}\mbox{Log}(1+\mu(r)\varphi(r)) & if\text{ }\mu(r)>0\\
\varphi(r) & if\ \ \mu(r)=0
\end{array}
\right.  \,. \label{cylinder}%
\end{equation}
It is well known that (see \cite[Theorem 14]{akin}) if $p\in\mathcal{R}^{+}$,
then $e_{p}(t,s)>0$ for all $t\in\mathbb{T}$. Also, the exponential function
$y(t)=e_{p}(t,s)$ is the solution to the initial value problem $y^{\Delta
}=p(t)y,\,y(s)=1$. Other properties of the exponential function are given in
the following results:

\begin{lemma}
\label{lemma2.3} \cite[Theorem 2.36]{book} Let $p,q\in\mathcal{R}$. Then

\begin{itemize}
\item[i.] $e_{0}(t,s)\equiv1$ and $e_{p}(t,t)\equiv1$;

\item[ii.] $e_{p}(\sigma(t),s)=(1+\mu(t)p(t))e_{p}(t,s)$;

\item[iii.] $\frac{1}{e_{p}(t,s)}=e_{\ominus p}(t,s)$ where $\ominus
p(t)=-\frac{p(t)}{1+\mu(t)p(t)}$;

\item[iv.] $e_{p}(t,s)=\frac{1}{e_{p}(s,t)}=e_{\ominus p}(s,t)$;

\item[v.] $e_{p}(t,s)e_{p}(s,r)=e_{p}(t,r)$;

\item[vi.] $\left(  \frac{1}{e_{p}(\cdot,s)}\right)  ^{\Delta}=-\frac
{p(t)}{e_{p}^{\sigma}(\cdot,s)}$.
\end{itemize}
\end{lemma}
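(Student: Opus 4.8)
The plan is to derive all six items directly from the definition \eqref{exp} of $e_{\varphi}(t,s)$ together with two elementary facts about the $\Delta$-integral: additivity of the integral over adjacent intervals, $\int_{r}^{t}=\int_{r}^{s}+\int_{s}^{t}$ and $\int_{t}^{s}=-\int_{s}^{t}$, and the single-step evaluation $\int_{t}^{\sigma(t)}g(r)\Delta r=\mu(t)g(t)$. Since $e_{\varphi}$ is nothing but the exponential of an integral of the cylinder transformation $\zeta_{\mu(r)}(\varphi(r))$, each identity becomes a statement about that integrand, and the branch structure of \eqref{cylinder} (the cases $\mu(r)>0$ and $\mu(r)=0$) is the only place where genuine verification is needed.

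First I would dispose of the purely integral-theoretic items. Part (i) follows because $\zeta_{\mu(r)}(0)=0$ in both branches, so the integrand vanishes and $e_{0}(t,s)=\exp(0)=1$, while $e_{p}(t,t)=\exp(\int_{t}^{t}\cdots)=1$. Part (v) is immediate from additivity: $e_{p}(t,s)e_{p}(s,r)=\exp\!\big(\int_{s}^{t}+\int_{r}^{s}\big)=\exp\!\big(\int_{r}^{t}\big)=e_{p}(t,r)$. For part (ii) I would split $\int_{s}^{\sigma(t)}=\int_{s}^{t}+\int_{t}^{\sigma(t)}$ and evaluate the single-step piece as $\mu(t)\zeta_{\mu(t)}(p(t))$; when $\mu(t)>0$ this equals $\mbox{Log}(1+\mu(t)p(t))$, so exponentiating reproduces the factor $1+\mu(t)p(t)$, and when $\mu(t)=0$ both sides collapse since $\sigma(t)=t$.

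The key computation, which unlocks items (iii), (iv) and (vi), is the algebraic identity $\zeta_{\mu(r)}(\ominus p(r))=-\zeta_{\mu(r)}(p(r))$. Using $1+\mu(\ominus p)=1-\frac{\mu p}{1+\mu p}=\frac{1}{1+\mu p}$, the $\mu>0$ branch gives $\zeta_{\mu}(\ominus p)=\frac{1}{\mu}\mbox{Log}\big(\tfrac{1}{1+\mu p}\big)=-\frac{1}{\mu}\mbox{Log}(1+\mu p)=-\zeta_{\mu}(p)$, and the $\mu=0$ branch gives $\zeta_{0}(\ominus p)=-p=-\zeta_{0}(p)$. Integrating then yields $e_{\ominus p}(t,s)=\exp\!\big(-\int_{s}^{t}\zeta\big)=1/e_{p}(t,s)$, which is (iii). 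Part (iv) combines this with the limit-reversal $e_{p}(s,t)=\exp\!\big(-\int_{s}^{t}\zeta\big)=1/e_{p}(t,s)$ and (iii) applied with $s,t$ interchanged.

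Finally I would obtain (vi) by assembling the earlier parts rather than by a fresh differentiation. Writing $1/e_{p}(\cdot,s)=e_{\ominus p}(\cdot,s)$ by (iii), and using that $e_{\ominus p}(\cdot,s)$ solves the dynamic equation $y^{\Delta}=(\ominus p)y$ (the stated initial-value-problem characterization of the exponential function for the regressive coefficient $\ominus p$), gives $\big(1/e_{p}(\cdot,s)\big)^{\Delta}=(\ominus p)(t)\,e_{\ominus p}(t,s)=-\frac{p(t)}{1+\mu(t)p(t)}\cdot\frac{1}{e_{p}(t,s)}$; applying (ii) to rewrite $(1+\mu(t)p(t))e_{p}(t,s)=e_{p}^{\sigma}(\cdot,s)$ then produces the claimed $-p(t)/e_{p}^{\sigma}(\cdot,s)$. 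I expect the main obstacle to be nothing conceptual but the careful, simultaneous handling of the two cylinder-transformation branches in the identity $\zeta_{\mu}(\ominus p)=-\zeta_{\mu}(p)$ and in the single-step integral $\int_{t}^{\sigma(t)}g\,\Delta r=\mu(t)g(t)$, since every remaining item is a short corollary once these are secured.
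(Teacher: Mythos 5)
The paper offers no proof of this lemma at all---it is quoted verbatim from \cite[Theorem 2.36]{book}---so there is nothing internal to compare against; your argument therefore has to be judged on its own, and it is essentially correct and is the standard derivation directly from the definition (\ref{exp})--(\ref{cylinder}). Parts (i), (ii), (v) and the limit-reversal half of (iv) are exactly as you say, and your proof of (vi) correctly leans on the IVP characterization $y^{\Delta}=(\ominus p)y$, $y(s)=1$ (stated in the paper just before the lemma) combined with (ii) and (iii); note that this route is closer in spirit to the textbook's own proof, which derives most of these identities from uniqueness of solutions of the initial value problem rather than by manipulating the integral of the cylinder transformation. The one point to handle with more care is your key identity $\zeta_{\mu}(\ominus p)=-\zeta_{\mu}(p)$: the lemma is stated for all $p\in\mathcal{R}$, not just $\mathcal{R}^{+}$, so $1+\mu(r)p(r)$ may be negative, and for a negative real $z$ the principal logarithm satisfies $\mbox{Log}(1/z)=-\mbox{Log}(z)+2\pi i$ rather than $-\mbox{Log}(z)$. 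Your pointwise identity thus only holds modulo $2\pi i/\mu(r)$ at such points; this is harmless, because each right-scattered point contributes $\mu(r)\zeta_{\mu(r)}$ to the integral, so the total discrepancy is an integer multiple of $2\pi i$ and disappears under $\exp$, but the step should be stated that way (or the claim restricted to $\mathcal{R}^{+}$, which is all this paper ever uses). With that caveat recorded, every item goes through as you outline.
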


\begin{lemma}
\label{remark osc} \cite{oscillation}For a nonnegative $\varphi$ with
$-\varphi\in\mathcal{R}^{+}$, we have the inequalities%
\[
1-\int_{s}^{t}\varphi(u)\leq e_{-\varphi}(t,s)\leq\exp\left\{  -\int_{s}%
^{t}\varphi(u)\right\}  \text{ for all }t\geq s.
\]
If $\varphi$ is $rd$-continuous and nonnegative, then%
\[
1+\int_{s}^{t}\varphi(u)\leq e_{\varphi}(t,s)\leq\exp\left\{  \int_{s}%
^{t}\varphi(u)\right\}  \text{ for all }t\geq s.
\]

\end{lemma}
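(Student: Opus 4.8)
The plan is to prove Lemma~\ref{remark osc} by analyzing the cylinder transformation directly, since the exponential function is built from it via the integral representation~(\ref{exp}). The heart of both double inequalities reduces to pointwise estimates on the integrand $\zeta_{\mu(r)}(\pm\varphi(r))$, after which monotonicity of $\exp$ and of the integral lifts the pointwise bounds to the stated inequalities. Throughout I would split into the two regimes of the piecewise definition~(\ref{cylinder}): right-dense points where $\mu(r)=0$ and the integrand is just $\pm\varphi(r)$, and right-scattered points where $\mu(r)>0$ and a genuine logarithmic estimate is needed.

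For the second (upper, nonnegative $\varphi$) chain, I would first establish the right-hand inequality $e_{\varphi}(t,s)\le\exp\{\int_s^t\varphi\}$. By~(\ref{exp}) this is equivalent to $\int_s^t\zeta_{\mu(r)}(\varphi(r))\,\Delta r\le\int_s^t\varphi(r)\,\Delta r$, so it suffices to show $\zeta_{\mu(r)}(\varphi(r))\le\varphi(r)$ pointwise. At right-dense points this is an equality; at right-scattered points it becomes $\frac{1}{\mu}\mathrm{Log}(1+\mu\varphi)\le\varphi$, i.e.\ $\mathrm{Log}(1+x)\le x$ with $x=\mu\varphi\ge0$, a standard scalar inequality. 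The left-hand inequality $1+\int_s^t\varphi\le e_{\varphi}(t,s)$ is the less mechanical half: rather than bounding the integrand, I would prove it by a dynamic-equation argument. Setting $y(t)=e_{\varphi}(t,s)$, recall $y^{\Delta}=\varphi y$ with $y(s)=1$, and let $w(t)=1+\int_s^t\varphi(u)\,\Delta u$, so $w^{\Delta}=\varphi$ and $w(s)=1$. Since $\varphi\ge0$ and (by Lemma~\ref{lemma2.3}(ii) with $\varphi\in\mathcal{R}^+$ implicit from nonnegativity and $rd$-continuity) $y\ge1$, we get $y^{\Delta}=\varphi y\ge\varphi=w^{\Delta}$; integrating the inequality $y^{\Delta}-w^{\Delta}\ge0$ from $s$ to $t$ with matching initial data $y(s)=w(s)=1$ yields $y(t)\ge w(t)$ for $t\ge s$, which is exactly the claim.

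For the first (lower, $-\varphi\in\mathcal{R}^+$) chain the structure is symmetric. The right-hand inequality $e_{-\varphi}(t,s)\le\exp\{-\int_s^t\varphi\}$ again reduces to the pointwise bound $\zeta_{\mu(r)}(-\varphi(r))\le-\varphi(r)$; at right-scattered points this is $\frac{1}{\mu}\mathrm{Log}(1-\mu\varphi)\le-\varphi$, and here the hypothesis $-\varphi\in\mathcal{R}^+$ guarantees $1-\mu\varphi>0$ so the logarithm is defined, after which $\mathrm{Log}(1-x)\le-x$ (valid for $x<1$) closes it. The left-hand inequality $1-\int_s^t\varphi\le e_{-\varphi}(t,s)$ I would again handle by the comparison/integration argument: with $z(t)=e_{-\varphi}(t,s)$ solving $z^{\Delta}=-\varphi z$, $z(s)=1$, and $v(t)=1-\int_s^t\varphi(u)\,\Delta u$ satisfying $v^{\Delta}=-\varphi$, $v(s)=1$, one checks $z^{\Delta}-v^{\Delta}=-\varphi(z-1)$ and uses $0<z\le1$ together with $\varphi\ge0$ to conclude the difference is nonnegative, then integrates.

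The main obstacle I anticipate is not the right-hand (upper) inequalities, which are routine consequences of the scalar estimate $\mathrm{Log}(1+x)\le x$, but rather justifying the sign conditions and positivity needed for the left-hand (lower) inequalities. Specifically, the comparison argument silently relies on $e_{\varphi}(t,s)\ge1$ (resp.\ $0<e_{-\varphi}(t,s)\le1$) for $t\ge s$, which must be established first from positivity of the exponential (the cited result $p\in\mathcal{R}^+\Rightarrow e_p>0$) and the sign of $\varphi$; one must also verify that nonnegative $rd$-continuous $\varphi$ (resp.\ $-\varphi\in\mathcal{R}^+$) actually lands in the regressive class so that $e_{\varphi}$ is well defined. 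Once these positivity/regressivity bookkeeping steps are in place, the remaining estimates are direct, and care is mainly needed in the right-scattered case to confirm the logarithm's argument stays positive so that $\mathrm{Log}$ is applied legitimately.
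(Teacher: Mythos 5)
The paper does not prove this lemma at all: it is imported verbatim from the cited reference (Bohner's oscillation paper) and used as a black box, so there is no in-paper proof to compare against. Judged on its own, your argument is correct and is essentially the standard proof. The upper bounds follow, as you say, from the pointwise estimates $\zeta_{\mu(r)}(\pm\varphi(r))\le\pm\varphi(r)$ (i.e.\ $\mathrm{Log}(1+x)\le x$, with $-\varphi\in\mathcal{R}^{+}$ guaranteeing $1-\mu\varphi>0$), combined with monotonicity of the $\Delta$-integral and of $\exp$ applied to the representation (\ref{exp}). The lower bounds via the comparison of $y^{\Delta}=\varphi y$ with $w^{\Delta}=\varphi$ (resp.\ $z^{\Delta}=-\varphi z$ with $v^{\Delta}=-\varphi$) and integration from matching initial data also work; note that the auxiliary facts $e_{\varphi}(t,s)\ge 1$ and $0<e_{-\varphi}(t,s)\le 1$ for $t\ge s$ come for free from the sign of the cylinder transform you already computed for the upper bounds ($\zeta_{\mu(r)}(\varphi(r))\ge 0$ and $\zeta_{\mu(r)}(-\varphi(r))\le 0$), so there is no circularity. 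One small correction: Lemma \ref{lemma2.3}(ii) (the identity $e_{p}(\sigma(t),s)=(1+\mu(t)p(t))e_{p}(t,s)$) does not by itself yield $e_{\varphi}(t,s)\ge 1$ at right-dense points; cite instead the nonnegativity of the integrand in (\ref{exp}), or equivalently the already-proved monotonicity of $t\mapsto\int_{s}^{t}\zeta_{\mu(r)}(\varphi(r))\Delta r$. Also, rd-continuity in the first chain is implicit in the hypothesis $-\varphi\in\mathcal{R}^{+}\subset\mathcal{R}$, so your worry about well-definedness there is automatically resolved by the paper's definitions.
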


\begin{remark}
\cite[Remark 2.12]{akin2}\label{Lemma ep} If $\lambda\in\mathcal{R}^{+}$ and
$\lambda(r)<0$ for all $t\in\lbrack s,t)_{\mathbb{T}}$, then%
\[
0<e_{\lambda}(t,s)\leq\exp\left(  \int_{s}^{t}\lambda(r)\Delta r\right)  <1.
\]

\end{remark}

\section{Shift Operators and Delay functions}

\subsection{Shift operators}

First, we give a generalized version of shift operators (see \cite{adrafdelay} and
\cite{bams}). A limited version of shift operators can be found in
\cite{adivar}.

\begin{definition}
[Shift operators]\cite{adrafdelay}\label{shift} Let $\mathbb{T}^{\ast}$ be a
non-empty subset of the time scale $\mathbb{T}$ including a fixed number
$t_{0}\in\mathbb{T}^{\ast}$ such that there exist operators $\delta_{\pm
}:[t_{0},\infty)_{\mathbb{T}}\times\mathbb{T}^{\ast}\rightarrow\mathbb{T}%
^{\ast}$ satisfying the following properties:

\begin{enumerate}
\item[P.1] The functions $\delta_{\pm}$ are strictly increasing with respect
to their second arguments, i.e., if
\[
(T_{0},t),(T_{0},u)\in\mathcal{D}_{\pm}:=\left\{  (s,t)\in\lbrack t_{0}%
,\infty)_{\mathbb{T}}\times\mathbb{T}^{\ast}:\delta_{\pm}(s,t)\in
\mathbb{T}^{\ast}\right\}  ,
\]
then
\[
T_{0}\leq t<u\text{ implies }\delta_{\pm}(T_{0},t)<\delta_{\pm}(T_{0},u),
\]

\item[P.2] If $(T_{1},u),(T_{2},u)\in\mathcal{D}_{-}$ with $T_{1}<T_{2}$, then%
\[
\delta_{-}(T_{1},u)>\delta_{-}(T_{2},u),
\]
and if $(T_{1},u),(T_{2},u)\in\mathcal{D}_{+}$ with $T_{1}<T_{2}$, then
\[
\delta_{+}(T_{1},u)<\delta_{+}(T_{2},u),
\]

\item[P.3] If $t\in\lbrack t_{0},\infty)_{\mathbb{T}}$, then $(t,t_{0}%
)\in\mathcal{D}_{+}$ and $\delta_{+}(t,t_{0})=t$. Moreover, if $t\in
\mathbb{T}^{\ast}$, then $(t_{0},t)$ $\in\mathcal{D}_{+}$ and $\delta
_{+}(t_{0},t)=t$ holds,

\item[P.4] If $(s,t)\in\mathcal{D}_{\pm}$, then $(s,\delta_{\pm}%
(s,t))\in\mathcal{D}_{\mp}$ and $\delta_{\mp}(s,\delta_{\pm}(s,t))=t$,

\item[P.5] If $(s,t)\in\mathcal{D}_{\pm}$ and $(u,\delta_{\pm}(s,t))\in
\mathcal{D}_{\mp}$, then $(s,\delta_{\mp}(u,t))\in\mathcal{D}_{\pm}$ and
\[
\delta_{\mp}(u,\delta_{\pm}(s,t))=\delta_{\pm}(s,\delta_{\mp}(u,t)).
\]

\end{enumerate}

\noindent Then the operators $\delta_{-}$ and $\delta_{+}$ associated with
$t_{0}\in\mathbb{T}^{\ast}$ (called the initial point) are said to be
\textit{backward and forward shift operators} on the set $\mathbb{T}^{\ast}$,
respectively. The variable $s\in\lbrack t_{0},\infty)_{\mathbb{T}}$ in
$\delta_{\pm}(s,t)$ is called the shift size. The values $\delta_{+}(s,t)$ and
$\delta_{-}(s,t)$ in $\mathbb{T}^{\ast}$ indicate $s$ units translation of the
term $t\in\mathbb{T}^{\ast}$ to the right and left, respectively. The sets
$\mathcal{D}_{\pm}$ are the domains of the shift operators $\delta_{\pm}$, respectively.
\end{definition}

\begin{example}
\cite{adrafdelay}Let $\mathbb{T=R}$ and $t_{0}=1$. The operators%
\begin{equation}
\delta_{-}(s,t)=\left\{
\begin{array}
[c]{cc}%
t/s & \text{if }t\geq0\\
st & \text{if }t<0
\end{array}
\right.  ,\ \ \ \text{for }s\in\lbrack1,\infty) \label{rs1}%
\end{equation}
and%
\begin{equation}
\delta_{+}(s,t)=\left\{
\begin{array}
[c]{cc}%
st & \text{if }t\geq0\\
t/s & \text{if }t<0
\end{array}
\right.  ,\ \ \ \text{for }s\in\lbrack1,\infty) \label{rs2}%
\end{equation}
are backward and forward shift operators (on the set $\mathbb{T}^{\ast
}=\mathbb{R-}\left\{  0\right\}  $) associated with the initial point
$t_{0}=1$. In the table below, we state different time scales with their
corresponding shift operators.
\[%
\begin{tabular}
[c]{|c||c|c|c|c|}\hline
$\mathbb{T}$ & $t_{0}$ & $\mathbb{T}^{\ast}$ & $\delta_{-}(s,t)$ & $\delta
_{+}(s,t)$\\\hline\hline
$\mathbb{R}$ & $0$ & $\mathbb{R}$ & $t-s$ & $t+s$\\\hline
$\mathbb{Z}$ & $0$ & $\mathbb{Z}$ & $t-s$ & $t+s$\\\hline
$q^{\mathbb{Z}}\cup\left\{  0\right\}  $ & $1$ & $q^{\mathbb{Z}}$ & $\frac
{t}{s}$ & $st$\\\hline
$\mathbb{N}^{1/2}$ & $0$ & $\mathbb{N}^{1/2}$ & $\sqrt{t^{2}-s^{2}}$ &
$\sqrt{t^{2}+s^{2}}$\\\hline
\end{tabular}
\ \ \ \ \ \ \ \ \ \ \
\]

\end{example}

The proof of the next lemma is a direct consequence of Definition \ref{shift}.

\begin{lemma}
\cite{adrafdelay}\label{lem pro} Let $\delta_{-}$ and $\delta_{+}$ be the
shift operators associated with the initial point $t_{0}$. We have

\begin{enumerate}
\item[i.] $\delta_{-}(t,t)=t_{0}$ for all $t\in\lbrack t_{0},\infty
)_{\mathbb{T}}$,

\item[ii.] $\delta_{-}(t_{0},t)=t$ for all $t\in\mathbb{T}^{\ast}$,

\item[iii.] If $(s,t)\in$ $\mathcal{D}_{+}$, then $\delta_{+}(s,t)=u$ implies
$\delta_{-}(s,u)=t$. Conversely, if $(s,u)\in$ $\mathcal{D}_{-}$, then
$\delta_{-}(s,u)=t$ implies $\delta_{+}(s,t)=u$.

\item[iv.] $\delta_{+}(t,\delta_{-}(s,t_{0}))=\delta_{-}(s,t)$ for all
$(s,t)\in$ $\mathcal{D}(\delta_{+})$ with $t\geq t_{0,}$

\item[v.] $\delta_{+}(u,t)=\delta_{+}(t,u)$ for all $(u,t)\in\left(  \lbrack
t_{0},\infty)_{\mathbb{T}}\times\lbrack t_{0},\infty)_{\mathbb{T}}\right)
\cap\mathcal{D}_{+}$,

\item[vi.] $\delta_{+}(s,t)\in\lbrack t_{0},\infty)_{\mathbb{T}}$ for all
$(s,t)\in$ $\mathcal{D}_{+}$ with $t\geq t_{0}$,

\item[vii.] $\delta_{-}(s,t)\in\lbrack t_{0},\infty)_{\mathbb{T}}$ for all
$(s,t)\in$ $\left(  [t_{0},\infty)_{\mathbb{T}}\times\lbrack s,\infty
)_{\mathbb{T}}\right)  \cap\mathcal{D}_{-}$,

\item[viii.] If $\delta_{+}(s,.)$ is $\Delta-$differentiable in its second
variable, then $\delta_{+}^{\Delta_{t}}(s,.)>0$,

\item[ix.] $\delta_{+}(\delta_{-}(u,s),\delta_{-}(s,v))=\delta_{-}(u,v)$ for
all $(s,v)\in\left(  \lbrack t_{0},\infty)_{\mathbb{T}}\times\lbrack
s,\infty)_{\mathbb{T}}\right)  \cap\mathcal{D}_{-}$ and $(u,s)\in\left(
\lbrack t_{0},\infty)_{\mathbb{T}}\times\lbrack u,\infty)_{\mathbb{T}}\right)
\cap\mathcal{D}_{-}$,

\item[x.] If $(s,t)\in\mathcal{D}_{-}$ and $\delta_{-}(s,t)=t_{0}$, then $s=t$.
\end{enumerate}
\end{lemma}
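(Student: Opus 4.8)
The plan is to derive all ten items directly from the axioms P.1--P.5, organizing them into three groups so that later items can invoke earlier ones. I would first dispose of the three ``inverse'' identities (i)--(iii), which need essentially nothing beyond P.3 and P.4. For (iii) I would read off P.4: if $(s,t)\in\mathcal{D}_+$ and $\delta_+(s,t)=u$, then P.4 places $(s,u)$ in $\mathcal{D}_-$ and gives $\delta_-(s,u)=\delta_-(s,\delta_+(s,t))=t$, and the converse is the mirror statement of P.4. For (i) I would specialize P.3 to $\delta_+(t,t_0)=t$ for $t\ge t_0$ and then apply P.4 to collapse $\delta_-(t,t)=\delta_-(t,\delta_+(t,t_0))=t_0$; (ii) is the same computation with the shift size set to $t_0$, using $\delta_+(t_0,t)=t$.

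Next I would handle the range and monotonicity items (vi)--(viii), which rest on the strict monotonicity in P.1 pinned to a base value supplied by the previous group. For (vi) I would note that the shift size satisfies $s\ge t_0$, so P.3 gives $\delta_+(s,t_0)=s$ and P.1 gives $\delta_+(s,t)\ge\delta_+(s,t_0)=s\ge t_0$ whenever $t\ge t_0$; (vii) is identical, with the base value $\delta_-(s,s)=t_0$ taken from (i) and $t\ge s$. For (viii) I would use that P.1 makes every difference quotient $\frac{\delta_+(s,\sigma(t))-\delta_+(s,r)}{\sigma(t)-r}$ strictly positive for $r<\sigma(t)$ (numerator and denominator both positive), so the delta derivative is positive, most transparently at right-scattered points from the explicit quotient $\bigl(\delta_+(s,\sigma(t))-\delta_+(s,t)\bigr)/\mu(t)$.

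The substantive content lies in the composition/commutation identities (iv), (v) and (ix), all of which come from P.5 read with the appropriate choice of signs. For (iv) I would apply P.5 in the form $\delta_-(u,\delta_+(s,t'))=\delta_+(s,\delta_-(u,t'))$ with $s\mapsto t$, $t'\mapsto t_0$, $u\mapsto s$, so that the right side becomes $\delta_+(t,\delta_-(s,t_0))$ while the left side collapses to $\delta_-(s,\delta_+(t,t_0))=\delta_-(s,t)$ via P.3. For (v) I would run P.5 in the form $\delta_+(u,\delta_-(s,t))=\delta_-(s,\delta_+(u,t))$ with $s\mapsto t$ and $t\mapsto t$; the left side reduces by (i) and P.3 to $\delta_+(u,t_0)=u$, leaving $\delta_-(t,\delta_+(u,t))=u$, and then the converse half of (iii) converts this into $\delta_+(t,u)=\delta_+(u,t)$. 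Finally, for (ix) I would chain these: writing $E=\delta_+(\delta_-(u,s),\delta_-(s,v))$, commute by (v) to $\delta_+(\delta_-(s,v),\delta_-(u,s))$, apply P.5 to obtain $\delta_-(u,\delta_+(\delta_-(s,v),s))$, commute again by (v) to $\delta_-(u,\delta_+(s,\delta_-(s,v)))$, and collapse the inner term by the converse of (iii) (that $\delta_+(s,\delta_-(s,v))=v$) to reach $\delta_-(u,v)$.

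I expect the main obstacle to be twofold. First, (v) is the linchpin, since every later composition rests on commutativity of $\delta_+$ on $[t_0,\infty)_{\mathbb{T}}$, and extracting it from P.5 requires the slightly unobvious specialization $s=t$ together with the inverse identity (iii). Second, and more delicately, each use of P.5, (v) and (iii) in (iv) and (ix) carries a domain hypothesis (membership in $\mathcal{D}_\pm$), and the real care is in checking that $\delta_-(u,s)$ and $\delta_-(s,v)$ lie in $[t_0,\infty)_{\mathbb{T}}$, so that they are admissible shift sizes and (v) applies, which is exactly where (vi)--(vii) are invoked. Verifying that every intermediate pair stays inside $\mathcal{D}_\pm$, rather than the algebra itself, will be the part demanding the most attention.
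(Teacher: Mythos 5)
The paper itself offers no proof of this lemma: it is quoted from \cite{adrafdelay} with the remark that it ``is a direct consequence of Definition \ref{shift},'' so there is no argument in the text to compare yours against step by step. Taken on its own, your derivation of items (i)--(vii) and (ix) from P.1--P.5 is correct and well organized: the reduction of (i)--(iii) to P.3 and P.4, the monotonicity arguments for (vi)--(vii) pinned at the base values $\delta_{+}(s,t_{0})=s$ and $\delta_{-}(s,s)=t_{0}$, the extraction of the commutativity (v) from P.5 specialized at $s=t$ followed by the converse half of (iii), and the chain (v)--P.5--(v)--P.4 for (ix) all check out. You are also right that the genuine labor is in verifying the $\mathcal{D}_{\pm}$ memberships at each application of P.5 and (v), which is exactly where (vi)--(vii) and P.3 are needed.

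Two points require attention. First, you never address item (x); it is immediate (from $\delta_{-}(s,t)=t_{0}$, part (iii) gives $t=\delta_{+}(s,t_{0})$, which equals $s$ by P.3), but it is part of the statement and should not be skipped. Second, your argument for (viii) is complete only at right-scattered points. At a right-dense point the delta derivative is a limit of strictly positive difference quotients, and such a limit need only be nonnegative; strict monotonicity of $\delta_{+}(s,\cdot)$ does not by itself force $\delta_{+}^{\Delta_{t}}(s,\cdot)>0$ there. Indeed, on $\mathbb{T}=\mathbb{R}$ with $t_{0}=0$ the operators $\delta_{\pm}(s,t)=(t^{3}\pm s^{3})^{1/3}$ satisfy P.1--P.5 yet have vanishing $t$-derivative at $t=0$, so the strict inequality cannot be extracted from the axioms alone at right-dense points. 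You should either prove only $\delta_{+}^{\Delta_{t}}(s,\cdot)\geq 0$ in general (with strict positivity at right-scattered points, where your explicit quotient $\bigl(\delta_{+}(s,\sigma(t))-\delta_{+}(s,t)\bigr)/\mu(t)$ does the job), or identify the extra hypothesis under which the cited source obtains the strict version.
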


\subsection{Delay functions generated by shift operators}

Next, we define the delay function by means of shift operators on time scales.
Delay functions generated by shift operators were first introduced in
\cite{adrafdelay} to construct delay equations on time scales.

\begin{definition}
[Delay functions]\label{def 2}\cite{adrafdelay} Let $\mathbb{T}$ be a time
scale that is unbounded above and $\mathbb{T}^{\ast}$ an unbounded subset of
$\mathbb{T}$ including a fixed number $t_{0}\in\mathbb{T}^{\ast}$ such that
there exist shift operators $\delta_{\pm}:[t_{0},\infty)_{\mathbb{T}}%
\times\mathbb{T}^{\ast}\rightarrow\mathbb{T}^{\ast}$ associated with $t_{0}$.
Suppose that $h\in(t_{0},\infty)_{\mathbb{T}}$ is a constant such that
$(h,t)\in D_{\pm}$ for all $t\in\lbrack t_{0},\infty)_{\mathbb{T}}$, the
function $\delta_{-}(h,t)$ is differentiable with an $rd$-continuous
derivative $\delta_{-}^{\Delta_{t}}(h,t)$, and $\delta_{-}(h,t)$ maps
$[t_{0},\infty)_{\mathbb{T}}$ onto $[\delta_{-}(h,t_{0}),\infty)_{\mathbb{T}}%
$. Then the function $\delta_{-}(h,t)$ is called the delay function generated
by the shift $\delta_{-}$ on the time scale $\mathbb{T}$.
\end{definition}

It is obvious from P.2 in Definition \ref{def 2} and (ii) of Lemma
\ref{lem pro} that%
\begin{equation}
\delta_{-}(h,t)<\delta_{-}(t_{0},t)=t\text{ for all }t\in\lbrack t_{0}%
,\infty)_{\mathbb{T}}\text{.} \label{delay less}%
\end{equation}
Notice that $\delta_{-}(h,.)$ is strictly increasing and it is invertible.
Hence, by P.4-5%
\[
\delta_{-}^{-1}(h,t)=\delta_{+}(h,t).
\]

Hereafter, we shall suppose that $\mathbb{T}$ is a time scale with the delay
function $\delta_{-}(h,.):[t_{0},\infty)_{\mathbb{T}}\rightarrow\lbrack
\delta_{-}(h,t_{0}),\infty)_{\mathbb{T}}$, where $t_{0}\in\mathbb{T}$ is
fixed. Denote by $\mathbb{T}_{1}$ and $\mathbb{T}_{2}$ the sets%
\begin{equation}
\mathbb{T}_{1}=[t_{0},\infty)_{\mathbb{T}}\text{\ \ and }\mathbb{T}_{2}%
=\delta_{-}(h,\mathbb{T}_{1}). \label{T12}%
\end{equation}
Evidently, $\mathbb{T}_{1}$ is closed in $\mathbb{R}$. By definition we have
$\mathbb{T}_{2}=[\delta_{-}(h,t_{0}),\infty)_{\mathbb{T}}$. Hence,
$\mathbb{T}_{1}$ and $\mathbb{T}_{2}$ are both time scales. Let $\sigma_{1}$
and $\sigma_{2}$ denote the forward jumps on the time scales $\mathbb{T}_{1}$
and $\mathbb{T}_{2}$, respectively. By (\ref{delay less}-\ref{T12})
\[
\mathbb{T}_{1}\subset\mathbb{T}_{2}\subset\mathbb{T}.
\]
Thus,%
\[
\sigma(t)=\sigma_{2}(t)\text{ for all }t\in\mathbb{T}_{2}%
\]
and%
\[
\sigma(t)=\sigma_{1}(t)=\sigma_{2}(t)\text{ for all }t\in\mathbb{T}_{1}.
\]
That is, $\sigma_{1}$ and $\sigma_{2}$ are the restrictions of forward jump
operator $\sigma:\mathbb{T\rightarrow T}$ to the time scales $\mathbb{T}_{1}$
and $\mathbb{T}_{2}$, respectively, i.e.,%
\[
\sigma_{1}=\left.  \sigma\right\vert _{\mathbb{T}_{1}}\text{ and }\sigma
_{2}=\left.  \sigma\right\vert _{\mathbb{T}_{2}}\text{.}%
\]

\begin{lemma}
\cite{adrafdelay}The delay function $\delta_{-}(h,t)$ preserves the structure
of the points in $\mathbb{T}_{1}$. That is,%
\[
\sigma_{1}(\widehat{t})=\widehat{t}\text{ implies }\sigma_{2}(\delta
_{-}(h,\widehat{t}))=\delta_{-}(h,\widehat{t}).
\]%
\[
\sigma_{1}(\widehat{t})>\widehat{t}\text{ implies }\sigma_{2}(\delta
_{-}(h,\widehat{t})>\delta_{-}(h,\widehat{t}).
\]

\end{lemma}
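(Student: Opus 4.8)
The plan is to reduce both implications to a single commutation identity between the shift operator and the two forward jumps, namely
\[
\sigma_{2}\bigl(\delta_{-}(h,\widehat{t})\bigr)=\delta_{-}\bigl(h,\sigma_{1}(\widehat{t})\bigr)\qquad\text{for all }\widehat{t}\in\mathbb{T}_{1}.
\]
Once this identity is in hand the lemma is immediate: if $\sigma_{1}(\widehat{t})=\widehat{t}$ then the right-hand side equals $\delta_{-}(h,\widehat{t})$, which is the first assertion; and if $\sigma_{1}(\widehat{t})>\widehat{t}$ then, since $\delta_{-}(h,\cdot)$ is strictly increasing in its second argument by P.1, the right-hand side satisfies $\delta_{-}(h,\sigma_{1}(\widehat{t}))>\delta_{-}(h,\widehat{t})$, which is the second assertion. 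So the whole proof collapses to establishing the displayed identity.

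To prove it I would argue directly from the definition of the forward jump as an infimum. The preceding discussion gives that $\delta_{-}(h,\cdot)\colon\mathbb{T}_{1}\to\mathbb{T}_{2}$ is a strictly increasing bijection (with inverse $\delta_{+}(h,\cdot)$) mapping $\mathbb{T}_{1}$ onto $\mathbb{T}_{2}$, and Definition \ref{def 2} gives that it is $\Delta$-differentiable, hence continuous. Writing every point of $\mathbb{T}_{2}$ lying above $\delta_{-}(h,\widehat{t})$ as $\delta_{-}(h,u)$ with $u\in\mathbb{T}_{1}$, and using strict monotonicity to rewrite the constraint $\delta_{-}(h,u)>\delta_{-}(h,\widehat{t})$ as $u>\widehat{t}$, I obtain
\[
\sigma_{2}\bigl(\delta_{-}(h,\widehat{t})\bigr)=\inf\{s\in\mathbb{T}_{2}:s>\delta_{-}(h,\widehat{t})\}=\inf\{\delta_{-}(h,u):u\in\mathbb{T}_{1},\ u>\widehat{t}\}.
\]
Because $\delta_{-}(h,\cdot)$ is continuous and increasing, the infimum can be pushed inside the function, giving $\delta_{-}\bigl(h,\inf\{u\in\mathbb{T}_{1}:u>\widehat{t}\}\bigr)=\delta_{-}(h,\sigma_{1}(\widehat{t}))$, which is exactly the claimed identity.

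The one delicate point is the interchange of the infimum with $\delta_{-}(h,\cdot)$, and this is precisely where the two cases genuinely differ. When $\widehat{t}$ is right-scattered the value $\inf\{u:u>\widehat{t}\}=\sigma_{1}(\widehat{t})$ is attained in $\mathbb{T}_{1}$, so the interchange is purely order-theoretic: $\delta_{-}(h,\cdot)$, being an order isomorphism onto $\mathbb{T}_{2}$, carries the minimum of the successor set to the minimum of its image, and no continuity is needed. When $\widehat{t}$ is right-dense the infimum is a genuine right-limit that is \emph{not} attained, and here I must invoke continuity of $\delta_{-}(h,\cdot)$ (equivalently, the closedness of $\mathbb{T}_{2}$) to conclude that $\delta_{-}(h,u)\downarrow\delta_{-}(h,\widehat{t})$ as $u\downarrow\widehat{t}$ through $\mathbb{T}_{1}$ and that no strictly larger value can be a lower bound of the image set. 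I expect this limit-passing in the right-dense case to be the only real obstacle; the right-scattered case follows at once from $\delta_{-}(h,\cdot)$ being an order-preserving bijection between $\mathbb{T}_{1}$ and $\mathbb{T}_{2}$.
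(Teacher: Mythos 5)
The paper does not actually prove this lemma --- it is quoted from \cite{adrafdelay} and used as a black box --- so there is no in-paper argument to compare against line by line. Your proof is correct, but it is worth noting that it inverts the paper's logical organization: you establish the full commutation identity $\sigma_{2}(\delta_{-}(h,\widehat{t}))=\delta_{-}(h,\sigma_{1}(\widehat{t}))$ first and read off both implications of the lemma as trivial consequences, whereas the paper states the lemma (dense points map to dense points, scattered to scattered) and only afterwards derives that identity as Corollary \ref{Cor 1}. Your route is arguably more economical, since one identity delivers the lemma and the corollary simultaneously, and your two-case analysis of the infimum is sound: in the right-scattered case the successor is attained and an order isomorphism of $\mathbb{T}_{1}$ onto $\mathbb{T}_{2}$ preserves minima with no analytic input; in the right-dense case you correctly identify that something more is needed, and either of the two justifications you offer works --- continuity of $\delta_{-}(h,\cdot)$ (guaranteed by the $\Delta$-differentiability required in Definition \ref{def 2}), or the purely order-theoretic argument that any lower bound $c>\delta_{-}(h,\widehat{t})$ of the image set would lie in the closed set $\mathbb{T}_{2}$, hence equal $\delta_{-}(h,v)$ for some $v>\widehat{t}$, contradicting right-density of $\widehat{t}$ in $\mathbb{T}_{1}$. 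The only point worth making explicit if you write this up is that surjectivity of $\delta_{-}(h,\cdot)$ onto $\mathbb{T}_{2}=[\delta_{-}(h,t_{0}),\infty)_{\mathbb{T}}$, assumed in Definition \ref{def 2}, is what licenses writing every element of $\{s\in\mathbb{T}_{2}:s>\delta_{-}(h,\widehat{t})\}$ as $\delta_{-}(h,u)$ with $u\in\mathbb{T}_{1}$, $u>\widehat{t}$; with that said, the argument is complete.
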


Using the preceding lemma and applying the fact that $\sigma_{2}(u)=\sigma(u)$
for all $u\in\mathbb{T}_{2}$ we arrive at the following result.

\begin{corollary}
\cite{adrafdelay}\label{Cor 1} We have%
\[
\delta_{-}(h,\sigma_{1}(t))=\sigma_{2}(\delta_{-}(h,t))\text{ for all }%
t\in\mathbb{T}_{1}\text{.}%
\]
Thus,%
\begin{equation}
\delta_{-}(h,\sigma(t))=\sigma(\delta_{-}(h,t))\text{ for all }t\in
\mathbb{T}_{1}\text{.} \label{sigma delta}%
\end{equation}

\end{corollary}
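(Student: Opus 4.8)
The plan is to reduce the identity $\delta_{-}(h,\sigma_{1}(t))=\sigma_{2}(\delta_{-}(h,t))$ to the single structural fact that $\delta_{-}(h,\cdot)$ is an order isomorphism from $\mathbb{T}_{1}$ onto $\mathbb{T}_{2}$, and then to split according to the two possible point types in $\mathbb{T}_{1}$. First I would record the facts that make the argument go through: by P.1 the map $\delta_{-}(h,\cdot)$ is strictly increasing, and by the defining hypothesis for a delay function it maps $[t_{0},\infty)_{\mathbb{T}}=\mathbb{T}_{1}$ onto $[\delta_{-}(h,t_{0}),\infty)_{\mathbb{T}}=\mathbb{T}_{2}$, so it is a strictly increasing bijection. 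Moreover, since $\mathbb{T}_{1}$ is unbounded above it has no maximum, whence $\sigma_{1}(t)\in\mathbb{T}_{1}$ and $(h,\sigma_{1}(t))\in\mathcal{D}_{-}$, so the left-hand side is well defined.

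For the right-dense case $\sigma_{1}(t)=t$ I would invoke the first implication of the preceding lemma directly: it gives that $\delta_{-}(h,t)$ is right-dense in $\mathbb{T}_{2}$, i.e. $\sigma_{2}(\delta_{-}(h,t))=\delta_{-}(h,t)$, while the left-hand side collapses to $\delta_{-}(h,\sigma_{1}(t))=\delta_{-}(h,t)$; the two sides coincide.

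For the right-scattered case $\sigma_{1}(t)>t$ the real content is to show that $\delta_{-}(h,\cdot)$ sends the immediate successor of $t$ to the immediate successor of $\delta_{-}(h,t)$. I would argue that $\delta_{-}(h,\sigma_{1}(t))$ is exactly the minimum of $\{u\in\mathbb{T}_{2}:u>\delta_{-}(h,t)\}$: strict monotonicity gives $\delta_{-}(h,\sigma_{1}(t))>\delta_{-}(h,t)$, and for any $u\in\mathbb{T}_{2}$ with $u>\delta_{-}(h,t)$ surjectivity yields $u=\delta_{-}(h,s)$ for some $s\in\mathbb{T}_{1}$, whence $s>t$ (again by monotonicity) forces $s\geq\sigma_{1}(t)$ and therefore $u\geq\delta_{-}(h,\sigma_{1}(t))$. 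Consequently $\delta_{-}(h,\sigma_{1}(t))=\inf\{u\in\mathbb{T}_{2}:u>\delta_{-}(h,t)\}=\sigma_{2}(\delta_{-}(h,t))$. This is the step I expect to be the main obstacle, because it is precisely where surjectivity of the delay function onto $\mathbb{T}_{2}$ is indispensable: without it a $\mathbb{T}_{2}$-point lying strictly between $\delta_{-}(h,t)$ and $\delta_{-}(h,\sigma_{1}(t))$ could not be ruled out.

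Finally, to obtain the displayed identity $\delta_{-}(h,\sigma(t))=\sigma(\delta_{-}(h,t))$ I would substitute the restriction relations established just above the corollary: for $t\in\mathbb{T}_{1}$ one has $\sigma_{1}(t)=\sigma(t)$, and since $\delta_{-}(h,t)\in\mathbb{T}_{2}$ one has $\sigma_{2}(\delta_{-}(h,t))=\sigma(\delta_{-}(h,t))$. Replacing $\sigma_{1}$ and $\sigma_{2}$ by $\sigma$ in the identity just proved gives the claimed relation.
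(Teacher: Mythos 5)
Your proof is correct and follows essentially the same route as the paper: the right-dense case comes from the structure-preservation lemma, and the restriction identities $\sigma_{1}=\left.\sigma\right\vert_{\mathbb{T}_{1}}$ and $\sigma_{2}=\left.\sigma\right\vert_{\mathbb{T}_{2}}$ convert the result into the displayed form. The paper only sketches the argument (deferring to the cited reference), and your treatment of the right-scattered case correctly supplies the one nontrivial ingredient --- that strict monotonicity together with surjectivity of $\delta_{-}(h,\cdot)$ onto $\mathbb{T}_{2}$ forces $\delta_{-}(h,\sigma_{1}(t))$ to be the immediate $\mathbb{T}_{2}$-successor of $\delta_{-}(h,t)$ --- which the bare statement that right-scattered points map to right-scattered points would not by itself yield.
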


By (\ref{sigma delta}) we have%
\[
\delta_{-}(h,\sigma(s))=\sigma(\delta_{-}(h,s))\text{ for all }s\in\lbrack
t_{0},\infty)_{\mathbb{T}}\text{.}%
\]
Substituting $s=\delta_{+}(h,t)$ we obtain%
\[
\delta_{-}(h,\sigma(\delta_{+}(h,t)))=\sigma(\delta_{-}(h,\delta
_{+}(h,t)))=\sigma(t)\text{.}%
\]
This and (iv) of Lemma \ref{lem pro} imply%
\[
\sigma(\delta_{+}(h,t))=\delta_{+}(h,\sigma(t))\text{ for all }t\in
\lbrack\delta_{-}(h,t_{0}),\infty)_{\mathbb{T}}\text{.}%
\]

\begin{example}
In the following, we give some time scales with their shift operators:
\[%
\begin{tabular}
[c]{|c||c|c|c|}\hline
$\mathbb{T}$ & $h$ & $\delta_{-}(h,t)$ & $\delta_{+}(h,t)$\\\hline\hline
$\mathbb{R}$ & $\in\mathbb{R}_{+}$ & $t-h$ & $t+h$\\\hline
$\mathbb{Z}$ & $\in\mathbb{Z}_{+}$ & $t-h$ & $t+h$\\\hline
$q^{\mathbb{Z}}\cup\left\{  0\right\}  $ & $\in q^{\mathbb{Z}_{+}}$ &
$\frac{t}{h}$ & $ht$\\\hline
$\mathbb{N}^{1/2}$ & $\in\mathbb{Z}_{+}$ & $\sqrt{t^{2}-h^{2}}$ & $\sqrt
{t^{2}+h^{2}}$\\\hline
\end{tabular}
\]

\end{example}

\begin{example}
There is no delay function $\delta_{-}(h,.):[0,\infty)_{\widetilde{\mathbb{T}%
}}\rightarrow\lbrack\delta_{-}(h,0),\infty)_{\mathbb{T}}$ on the time scale
$\widetilde{\mathbb{T}}\mathbb{=(-\infty},0]\cup\lbrack1,\infty)$.\newline
Suppose contrary that there exists a such delay function on $\widetilde
{\mathbb{T}}$. Then since $0$ is right scattered in $\widetilde{\mathbb{T}%
}_{1}:=[0,\infty)_{\widetilde{\mathbb{T}}}$ the point $\delta_{-}(h,0)$ must
be right scattered in $\widetilde{\mathbb{T}}_{2}=[\delta_{-}(h,0),\infty
)_{\mathbb{T}}$, i.e., $\sigma_{2}(\delta_{-}(h,0))>\delta_{-}(h,0)$. Since
$\sigma_{2}(t)=\sigma(t)$ for all $t\in\lbrack\delta_{-}(h,0),0)_{\mathbb{T}}%
$, we have%
\[
\sigma(\delta_{-}(h,0))=\sigma_{2}(\delta_{-}(h,0))>\delta_{-}(h,0).
\]
That is, $\delta_{-}(h,0)$ must be right scattered in $\widetilde{\mathbb{T}}%
$. However, in $\widetilde{\mathbb{T}}$ we have $\delta_{-}(h,0)<0$, that is,
$\delta_{-}(h,0)$ is right dense. This leads to a contradiction.
\end{example}

\section{Halanay type inequalities on time scales}

Let $\mathbb{T}$ be a time scale that is unbounded above\ and $t_{0}%
\in\mathbb{T}^{\ast}$ an element such that there exist the shift operators
$\delta_{\pm}:[t_{0},\infty)\times\mathbb{T}^{\ast}\rightarrow\mathbb{T}%
^{\ast}$ associated with $t_{0}$. Suppose that $h_{1}$, $h_{2}$,...$,h_{r}%
\in(t_{0},\infty)_{\mathbb{T}}$ are the constants with%
\[
t_{0}=h_{0}<h_{1}<h_{2}<...<h_{r}%
\]
and that there exist delay functions $\delta_{-}(h_{i},t)$, $i=1,2,...,r$, on
$\mathbb{T}$.

We define lower $\Delta$-derivative $\varphi^{\Delta_{-}}(t)$ of a function
$\varphi:\mathbb{T\rightarrow R}$ on time scales as follows:%
\begin{equation}
\varphi^{\Delta_{-}}(t)=\liminf\limits_{s\rightarrow t^{-}}\frac
{\varphi(s)-\varphi(\sigma(t))}{s-\sigma(t)}. \label{dini-}%
\end{equation}
Notice that%
\[
\varphi^{\Delta_{-}}(t)=\varphi^{\Delta}(t)
\]
provided that $\varphi$ is $\Delta$-differentiable at $t\in\mathbb{T}^{\kappa
}$.

Let $f(t,u,v)$ be a continuous function for all $\left(  u,v\right)  $ and
$t\in\lbrack t_{0},\alpha)_{\mathbb{T}}$. Hereafter, we suppose that $f$ is
monotone increasing with respect to $v$ and non-decreasing with respect to $u$.

\begin{proposition}
\label{pro1}Let $g(u_{1},u_{2},...,u_{r})$ be a continuous function that is
monotone increasing with respect to each of its arguments. If $\varphi$ and
$\psi$ are continuous functions satisfying%
\[
\varphi^{\Delta_{-}}(t)<f\left(  t,\varphi(t),g\left(  \varphi(\delta
_{-}(h_{1},t)),\varphi(\delta_{-}(h_{2},t)),...,\varphi(\delta_{-}%
(h_{r},t))\right)  \right)  ,
\]%
\[
\psi^{\Delta_{-}}(t)\geq f\left(  t,\psi(t),g\left(  \psi(\delta_{-}%
(h_{1},t)),\psi(\delta_{-}(h_{2},t)),...,\psi(\delta_{-}(h_{r},t))\right)
\right)  ,
\]
for all $t\in\lbrack t_{0},\alpha)_{\mathbb{T}}$ and $\varphi(s)<\psi(s)$ for
all $s\in\lbrack\delta_{-}(h_{r},t_{0}),t_{0}]_{\mathbb{T}}$, then%
\begin{equation}
\varphi(t)<\psi(t)\text{ for all }t\in(t_{0},\alpha)_{\mathbb{T}},
\label{ineq}%
\end{equation}
where $\alpha\in(t_{0},\infty)_{\mathbb{T}}$.
\end{proposition}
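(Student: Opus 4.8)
The plan is to argue by contradiction, exploiting the strict inequalities in the hypotheses and the monotonicity of $f$ and $g$. First I would assume the conclusion fails, so the set $\{t\in(t_0,\alpha)_{\mathbb{T}}:\varphi(t)\geq\psi(t)\}$ is nonempty. Since $\varphi$ and $\psi$ are continuous and $\varphi(s)<\psi(s)$ on the initial interval $[\delta_-(h_r,t_0),t_0]_{\mathbb{T}}$, the infimum of this set is a well-defined first crossing point $t^*\in(t_0,\alpha)_{\mathbb{T}}$. The key structural fact I want is that at $t^*$ we have $\varphi(t^*)=\psi(t^*)$ together with $\varphi(s)<\psi(s)$ for all $s<t^*$ (at least on the relevant delayed arguments and just to the left of $t^*$). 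Here I must be careful: on a time scale $t^*$ may be left-scattered or left-dense, and the definition of $t^*$ as an infimum, combined with closedness of $\mathbb{T}$ and continuity, should give equality at $t^*$ rather than a strict jump past $\psi$.

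The heart of the argument is a comparison of the lower $\Delta$-derivatives at $t^*$. Because the delay functions satisfy $\delta_-(h_i,t)<t$ by (\ref{delay less}), each delayed argument $\delta_-(h_i,t^*)$ lies strictly to the left of $t^*$, so $\varphi(\delta_-(h_i,t^*))<\psi(\delta_-(h_i,t^*))$ (or at worst $\leq$ if a delayed point coincides with an earlier crossing, which I would rule out or absorb). Since $g$ is monotone increasing in each argument, this yields
\[
g\left(\varphi(\delta_-(h_1,t^*)),\ldots,\varphi(\delta_-(h_r,t^*))\right)\leq g\left(\psi(\delta_-(h_1,t^*)),\ldots,\psi(\delta_-(h_r,t^*))\right).
\]
Combining this with $\varphi(t^*)=\psi(t^*)$ and using that $f$ is non-decreasing in its second argument $u$ and monotone increasing in its third argument $v$, I would deduce
\[
f\left(t^*,\varphi(t^*),g(\cdots\varphi\cdots)\right)\leq f\left(t^*,\psi(t^*),g(\cdots\psi\cdots)\right).
\]
Chaining the two differential hypotheses through this inequality gives $\varphi^{\Delta_-}(t^*)<\psi^{\Delta_-}(t^*)$.

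Finally I would convert this derivative inequality into a contradiction with the definition of $t^*$ as the first crossing. Using the definition (\ref{dini-}) of the lower $\Delta$-derivative, $\varphi^{\Delta_-}(t^*)<\psi^{\Delta_-}(t^*)$ means that for $s\to (t^*)^-$ the difference quotients of $\psi-\varphi$ relative to $\sigma(t^*)$ force $\psi-\varphi$ to be positive on a left-neighborhood of $t^*$ in a way that is incompatible with $\varphi(t^*)=\psi(t^*)$ being approached from below — that is, the sign of $(\psi-\varphi)^{\Delta_-}(t^*)$ controls the local behavior and prevents $\varphi$ from reaching $\psi$ at $t^*$ from the left. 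I expect the main obstacle to be exactly this last step: making the passage from the lower-$\Delta$-derivative inequality to a genuine local contradiction rigorous on a general time scale, where I must handle the left-scattered case (comparing values at $\rho(t^*)$ and $t^*$ directly) and the left-dense case (a limiting argument via the $\liminf$ in (\ref{dini-})) separately, since the geometric intuition of ``the faster-growing function cannot be overtaken'' must be justified from the definition rather than assumed. The strictness in the first hypothesis ($\varphi^{\Delta_-}<f$, rather than $\leq$) is what ultimately breaks the tie and should be used precisely here.
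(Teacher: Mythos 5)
Your overall strategy --- contradiction at the first crossing point $\xi=\inf\{t\in(t_0,\alpha)_{\mathbb{T}}:\varphi(t)\ge\psi(t)\}$, strictness at the delayed arguments, monotonicity of $g$ and of $f$ in its last two slots --- is exactly the paper's, but the argument does not close in two places. First, your ``key structural fact'' $\varphi(\xi)=\psi(\xi)$ is false when $\xi$ is left-scattered: continuity gives no control across the gap $(\rho(\xi),\xi)$, so $\varphi$ may jump strictly above $\psi$ there, and a derivative comparison \emph{at} $\xi$ is useless anyway because $\varphi^{\Delta_-}(\xi)$ is built from $\sigma(\xi)$, i.e.\ from behavior at and to the right of the crossing. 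The paper's resolution is not a derivative comparison at $\xi$ at all: since $\rho(\xi)$ is right-scattered and $\varphi,\psi$ are continuous, both are genuinely $\Delta$-differentiable at $\rho(\xi)$, giving the exact identities $\varphi(\xi)=\varphi(\rho(\xi))+\mu(\rho(\xi))\varphi^{\Delta}(\rho(\xi))$ and likewise for $\psi$; substituting the two differential hypotheses at $\rho(\xi)$ (where $\varphi<\psi$ holds at the point and at every delayed argument) yields $\varphi(\xi)<\psi(\xi)$ outright, contradicting $\varphi(\xi)\ge\psi(\xi)$. You gesture at ``comparing values at $\rho(t^*)$ and $t^*$ directly,'' but that \emph{is} the whole content of this case, not a technicality to be bolted onto an argument centered at $t^*$.

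Second, in the left-dense case you leave the decisive step --- converting $\varphi^{\Delta_-}(\xi)<\psi^{\Delta_-}(\xi)$ into a contradiction --- as an acknowledged ``obstacle,'' and the route you sketch via the sign of $(\psi-\varphi)^{\Delta_-}$ is not directly available, since $\liminf$ is not additive and the lower $\Delta$-derivative of a difference is not the difference of lower $\Delta$-derivatives. The correct move is to produce the reverse inequality directly from the definition: for $s\in[\delta_-(h_r,\xi),\xi)_{\mathbb{T}}$ one has $\varphi(s)<\psi(s)$ while $\varphi(\xi)=\psi(\xi)$, so the difference quotients satisfy $\bigl(\varphi(s)-\varphi(\sigma(\xi))\bigr)/\bigl(s-\sigma(\xi)\bigr)\ge\bigl(\psi(s)-\psi(\sigma(\xi))\bigr)/\bigl(s-\sigma(\xi)\bigr)$ because the denominator is negative; taking $\liminf_{s\to\xi^-}$ gives $\varphi^{\Delta_-}(\xi)\ge\psi^{\Delta_-}(\xi)$, which contradicts the strict inequality you already obtained from monotonicity. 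With these two repairs your proof coincides with the paper's.
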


\begin{proof}
Suppose that (\ref{ineq}) does not hold for some $t\in(t_{0},\alpha
)_{\mathbb{T}}$. Then the set%
\[
M:=\left\{  t\in(t_{0},\alpha)_{\mathbb{T}}:\varphi(t)\geq\psi(t)\right\}  .
\]
is non-empty. Since $M$ is bounded below we can let $\xi:=\inf M$. If $\xi$ is
left scattered (i.e. $\sigma(\rho(\xi))=\xi$), then it follows from the
definition of $\xi~$that%
\[
\varphi(\rho(\xi))<\psi(\rho(\xi)),
\]%
\[
\varphi(\xi)\geq\psi(\xi).
\]
Since $\rho(\xi)$ is right scattered, the function $\varphi$ is $\Delta
$-differentiable at $\rho(\xi)$ (see \cite[Theorem 1.16, (ii)]{book}), and
hence, $\varphi^{\Delta_{-}}(\rho(\xi))=\varphi^{\Delta}(\rho(\xi))$.
Similarly we obtain $\psi^{\Delta_{-}}(\rho(\xi))=\psi^{\Delta}(\rho(\xi))$.
Thus,%
\begin{align*}
\varphi(\xi)  &  =\varphi(\sigma(\rho(\xi)))\\
&  =\varphi(\rho(\xi))+\mu(\rho(\xi))\varphi^{\Delta}(\rho(\xi))\\
&  =\varphi(\rho(\xi))+\mu(\rho(\xi))\varphi^{\Delta_{-}}(\rho(\xi))\\
&  <\varphi(\rho(\xi))\\
&  +\mu(\rho(\xi))f\left(  \rho(\xi),\varphi(\rho(\xi)),g\left(
\varphi(\delta_{-}(h_{1},\rho(\xi))),\varphi(\delta_{-}(h_{2},\rho
(\xi))),...,\varphi(\delta_{-}(h_{r},\rho(\xi)))\right)  \right) \\
&  <\psi(\rho(\xi))\\
&  +\mu(\rho(\xi))f\left(  \rho(\xi),\psi(\rho(\xi)),g\left(  \psi(\delta
_{-}(h_{1},\rho(\xi))),\psi(\delta_{-}(h_{2},\rho(\xi))),...,\psi(\delta
_{-}(h_{r},\rho(\xi)))\right)  \right) \\
&  \leq\psi(\rho(\xi))+\mu(\rho(\xi))\psi^{\Delta_{-}}(\rho(\xi))\\
&  =\psi(\rho(\xi))+\mu(\rho(\xi))\psi^{\Delta}(\rho(\xi))\\
&  =\psi(\sigma(\rho(\xi)))\\
&  =\psi(\xi).
\end{align*}
This leads to a contradiction. If $\xi$ is left dense, then we have $\xi
>t_{0}$ and%
\[
\varphi(\xi)=\psi(\xi).
\]
Since%
\[
\delta_{-}(h_{r},\xi)<\xi\text{ for all }i=1,2,...,r
\]
and%
\[
\varphi(s)<\psi(s)\text{ for all }s\in\lbrack\delta_{-}(h_{r},\xi
),\xi)_{\mathbb{T}},
\]
we obtain
\[
g(\varphi(\delta_{-}(h_{1},\xi)),...,\varphi(\delta_{-}(h_{r},\xi)))\leq
g(\psi(\delta_{-}(h_{1},\xi)),...,\psi(\delta_{-}(h_{r},\xi))),
\]
and therefore,%
\begin{align*}
\varphi^{\Delta_{-}}(\xi)  &  <f\left(  \xi,\varphi(\xi),g\left(
\varphi(\delta_{-}(h_{1},\xi)),\varphi(\delta_{-}(h_{2},\xi)),...,\varphi
(\delta_{-}(h_{r},\xi))\right)  \right) \\
&  \leq f\left(  \xi,\psi(\xi),g\left(  \psi(\delta_{-}(h_{1},\xi
)),\psi(\delta_{-}(h_{2},\xi)),...,\psi(\delta_{-}(h_{r},\xi))\right)  \right)
\\
&  \leq\psi^{\Delta_{-}}(\xi).
\end{align*}
On the other hand, since%
\[
\frac{\varphi(s)-\varphi(\sigma(\xi))}{s-\sigma(\xi)}\geq\frac{\psi
(s)-\psi(\sigma(\xi))}{s-\sigma(\xi)}%
\]
for all $s\in\lbrack\delta_{-}(h_{r},\xi),\xi)_{\mathbb{T}}$ we get by
(\ref{dini-}) that%
\[
\varphi^{\Delta_{-}}(\xi)\geq\psi^{\Delta_{-}}(\xi).
\]
This also leads to a contradiction and so this completes the proof.
\end{proof}

\begin{proposition}
\label{pro2}If%
\[
\omega^{\Delta}(t)\leq f\left(  t,\omega(t),g\left(  \omega(\delta_{-}%
(h_{1},t)),\omega(\delta_{-}(h_{2},t)),...,\omega(\delta_{-}(h_{r},t))\right)
\right)
\]
for $t\in\lbrack s_{0},\delta_{+}(\alpha,s_{0}))_{\mathbb{T}}$ and
$y(t;s_{0},\omega)$ is a solution of the equation%
\[
y^{\Delta}(t)=f\left(  t,y(t),g\left(  y(\delta_{-}(h_{1},t)),y(\delta
_{-}(h_{2},t)),...,y(\delta_{-}(h_{r},t))\right)  \right)  ,
\]
which coincides with $\omega$ in $[\delta_{-}(h_{r},s_{0}),s_{0}]_{\mathbb{T}%
}$, then, supposing that this solution is defined in $[s_{0},\delta_{+}%
(\alpha,s_{0}))_{\mathbb{T}}$, it follows that $\omega(t)\leq y(t;s_{0}%
,\omega)$ for $t\in\lbrack s_{0},\delta_{+}(\alpha,s_{0}))_{\mathbb{T}}$.
\end{proposition}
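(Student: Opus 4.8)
The plan is to reduce the non-strict comparison to the strict one already established in Proposition \ref{pro1}, by perturbing the comparison equation and then passing to a limit. The point to overcome is that Proposition \ref{pro1} requires a \emph{strict} differential inequality on the lower function together with a strict inequality of initial data, whereas here $\omega$ only satisfies $\omega^{\Delta}\le f(\cdots)$ and coincides with $y$ on the history interval $[\delta_{-}(h_r,s_0),s_0]_{\mathbb{T}}$. A direct first-crossing argument does succeed at left-scattered points: if $\xi$ were the first point with $\omega(\xi)\ge y(\xi)$ and $\xi$ were left-scattered, the jump formula $\omega(\xi)=\omega(\rho(\xi))+\mu(\rho(\xi))\omega^{\Delta}(\rho(\xi))$ together with the monotonicity of $f$ and $g$ and $\omega(\rho(\xi))\le y(\rho(\xi))$ would give $\omega(\xi)\le y(\xi)$, contradicting $\omega(\xi)>y(\xi)$. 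However, at a left-dense crossing point one only recovers $\omega^{\Delta_{-}}(\xi)=y^{\Delta_{-}}(\xi)$ and no contradiction, which is exactly why a strictly perturbed majorant is needed.

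First I would fix $\epsilon>0$ and introduce the perturbed problem
\[
y_\epsilon^{\Delta}(t)=f\!\left(t,y_\epsilon(t),g\!\left(y_\epsilon(\delta_{-}(h_1,t)),\dots,y_\epsilon(\delta_{-}(h_r,t))\right)\right)+\epsilon,
\]
on $[s_0,\delta_{+}(\alpha,s_0))_{\mathbb{T}}$, subject to $y_\epsilon(s)=\omega(s)+\epsilon$ on $[\delta_{-}(h_r,s_0),s_0]_{\mathbb{T}}$. By construction $y_\epsilon^{\Delta_{-}}(t)=y_\epsilon^{\Delta}(t)=f(\cdots)+\epsilon>f(t,y_\epsilon(t),g(\cdots))$, so $y_\epsilon$ satisfies the $\psi$-hypothesis of Proposition \ref{pro1} with \emph{strict} inequality, while $\omega^{\Delta_{-}}(t)=\omega^{\Delta}(t)\le f(t,\omega(t),g(\cdots))$ supplies the $\varphi$-hypothesis; moreover $\omega(s)<\omega(s)+\epsilon=y_\epsilon(s)$ on the history interval. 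Running the first-crossing argument of Proposition \ref{pro1} verbatim (with $t_0$ replaced by $s_0$ and $\alpha$ by $\delta_{+}(\alpha,s_0)$, which is a harmless re-indexing of the shift structure), and noting that the strict inequality required at the crossing point $\xi$, both in the left-scattered chain and in the left-dense slope comparison, is now carried by the $+\epsilon$ slack of $\psi=y_\epsilon$ rather than by $\varphi=\omega$, yields
\[
\omega(t)<y_\epsilon(t)\qquad\text{for all }t\in(s_0,\delta_{+}(\alpha,s_0))_{\mathbb{T}}.
\]

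It then remains to let $\epsilon\to0^{+}$. Fixing $t$ and passing to the limit in $\omega(t)<y_\epsilon(t)$ gives $\omega(t)\le y(t;s_0,\omega)$ on $(s_0,\delta_{+}(\alpha,s_0))_{\mathbb{T}}$, while at $t=s_0$ the inequality is the equality $\omega(s_0)=y(s_0;s_0,\omega)$; together these establish the claim on the closed-at-$s_0$ interval. I expect this last step to be the main obstacle, since, unlike the purely algebraic comparison, it rests on the existence of $y_\epsilon$ on the whole of $[s_0,\delta_{+}(\alpha,s_0))_{\mathbb{T}}$ for all small $\epsilon$ and on the continuous dependence $y_\epsilon(t)\to y(t;s_0,\omega)$ as $\epsilon\to0^{+}$, uniformly on compact subintervals. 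This in turn relies on the existence, uniqueness and continuous-dependence theory for delay dynamic equations generated by the shift operators $\delta_{\pm}$ (as developed in \cite{adrafdelay}) and on the standing assumption that $y(t;s_0,\omega)$ is defined on the full interval, so that the perturbed solutions can be continued there. A structurally cleaner variant is to observe that the strict comparison just proved also gives monotonicity of $\epsilon\mapsto y_\epsilon$, whence $\bar y(t):=\lim_{\epsilon\to0^{+}}y_\epsilon(t)$ exists pointwise by monotone convergence; one would then check that $\bar y$ solves the unperturbed equation with the prescribed history and invoke uniqueness to identify $\bar y=y(\cdot;s_0,\omega)$, again reducing matters to the convergence and continuity facts flagged above.
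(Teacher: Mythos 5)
Your proof is correct and follows essentially the same route as the paper: perturb the comparison equation and its history data by a positive $\epsilon$, apply Proposition \ref{pro1} to obtain the strict inequality $\omega(t)<y_{\epsilon}(t)$, and pass to the limit $\epsilon\to0^{+}$. You are in fact more explicit than the paper about the one delicate point, namely that the convergence $y_{\epsilon}\to y(\cdot;s_0,\omega)$ needs a continuous-dependence (or monotone-limit plus uniqueness) justification, which the paper merely asserts.
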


\begin{proof}
Let $\varepsilon_{n}$ be a sequence of positive numbers tending monotonically
to zero, and $y_{n}$ be a solution of the equation%
\[
y^{\Delta}(t)=f\left(  t,y(t),g\left(  y(\delta_{-}(h_{1},t)),y(\delta
_{-}(h_{2},t)),...,y(\delta_{-}(h_{r},t))\right)  \right)  +\varepsilon_{n},
\]
which in $[\delta_{-}(h_{r},s_{0}),s_{0}]_{\mathbb{T}}$ coincides with
$\omega+\varepsilon_{n}$. On the basis of the preceding proposition, we have%
\[
y_{n+1}(t)<y_{n}(t)\text{ }%
\]
and%
\[
\lim_{n\rightarrow\infty}y_{n}(t)=y(t;s_{0},\omega)
\]
for all $t\in\lbrack s_{0},\delta_{+}(\alpha,s_{0}))_{\mathbb{T}}$. On the
basis of Proposition \ref{pro1} we have $\omega(t)<y_{n}(t)$ for $t\in\lbrack
s_{0},\delta_{+}(\alpha,s_{0}))_{\mathbb{T}}$, and hence, $\omega(t)\leq
y(t;s_{0},\omega)$. The proof is complete.
\end{proof}

Hereafter, we will denote by $\widetilde{\mu}$ the function defined by%
\begin{equation}
\widetilde{\mu}(t):=\sup\limits_{s\in\lbrack\delta_{-}(h_{r},t_{0}%
),t]_{\mathbb{T}}}\mu(s) \label{mutilda}%
\end{equation}
for $t\in\lbrack t_{0},\infty)_{\mathbb{T}}$. It is obvious that the sets
$\mathbb{R}$, $\mathbb{Z}$, $\overline{q^{\mathbb{Z}}}=\{q^{n}:n\in\mathbb{Z}$
and $q>1\}\cup\left\{  0\right\}  $, $h\mathbb{Z=\{}hn:n\in\mathbb{Z}$ and
$h>0\mathbb{\}}$ are the examples of time scales on which $\widetilde{\mu}%
=\mu$.

\begin{theorem}
\label{thm1}Let $x$ be a function satisfying the inequality%
\begin{equation}
x^{\Delta}(t)\leq-p(t)x(t)+\sum_{i=0}^{r}q_{i}(t)x^{\ell}(\delta_{-}%
(h_{i},t)),\ \ t\in\lbrack t_{0},\infty)_{\mathbb{T}}\text{,} \label{Ineq1}%
\end{equation}
where $\ell\in(0,1]$ is a constant, $p$ and $q_{i}$, $i=0,1,...,r$, are
continuous and bounded functions satisfying $1-\widetilde{\mu}(t)p(t)\geq0$;
$q_{i}(t)\geq0$, $i=0,1,...,r-1$; $q_{r}(t)>0$ for all $t\in\lbrack
t_{0},\infty)_{\mathbb{T}}$. Suppose that%
\begin{equation}
p(t)-\sum_{i=0}^{r}q_{i}(t)>0\text{ \ for all }t\in\lbrack t_{0}%
,\infty)_{\mathbb{T}}. \label{Cond1}%
\end{equation}
Then there exist a positively regressive function $\lambda:[t_{0}%
,\infty)_{\mathbb{T}}\mathbb{\rightarrow(-\infty},0)$ and $K_{0}>1$ such that%
\begin{equation}
x(t)\leq K_{0}e_{\lambda}(t,t_{0})\text{ for }t\in\lbrack t_{0},\infty
)_{\mathbb{T}} \label{ineq2}%
\end{equation}

\end{theorem}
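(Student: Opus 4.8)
The plan is to establish (\ref{ineq2}) by the minimal-counterexample/comparison technique already exploited in Proposition~\ref{pro1}, using the super-solution candidate $\psi(t)=K_0\,e_\lambda(t,t_0)$. Since $\psi^\Delta(t)=\lambda(t)\psi(t)$, the argument reduces to three tasks: (i) selecting a positively regressive $\lambda:[t_0,\infty)_{\mathbb T}\to(-\infty,0)$ for which $\psi$ is a super-solution of the dynamic equation associated with (\ref{Ineq1}); (ii) fixing $K_0>1$ so that $x(s)\le\psi(s)$ on the initial interval $[\delta_-(h_r,t_0),t_0]_{\mathbb T}$; and (iii) propagating $x\le\psi$ to all of $[t_0,\infty)_{\mathbb T}$. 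Task (ii) is routine: on the compact interval $[\delta_-(h_r,t_0),t_0]_{\mathbb T}$ both $x$ and $e_\lambda(\cdot,t_0)$ are continuous hence bounded, and $e_\lambda(\cdot,t_0)$ is bounded below by a positive constant (Remark~\ref{Lemma ep}), so choosing $K_0$ larger than the resulting quotient and larger than $1$ suffices.

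For Task (i) I would study the quantity
\[
\Gamma(t,\lambda):=\lambda+p(t)-\sum_{i=0}^{r}q_i(t)\,K_0^{\,\ell-1}\,\frac{\bigl(e_\lambda(\delta_-(h_i,t),t_0)\bigr)^{\ell}}{e_\lambda(t,t_0)},
\]
whose nonnegativity is precisely the super-solution inequality for $\psi$ after division by $K_0 e_\lambda(t,t_0)>0$. At $\lambda\equiv0$ with $K_0=1$ one has $e_0\equiv1$, so $\Gamma(t,0)=p(t)-\sum_{i=0}^{r}q_i(t)$, which by (\ref{Cond1}) and the boundedness of $p,q_i$ is bounded below by $L:=\inf_t\bigl(p-\sum q_i\bigr)>0$. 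Writing $e_\lambda(\delta_-(h_i,t),t_0)/e_\lambda(t,t_0)=e_\lambda(\delta_-(h_i,t),t)$ via Lemma~\ref{lemma2.3}(v) and controlling it through Lemma~\ref{remark osc} and the uniform graininess bound $\mu(s)\le\widetilde\mu(t)$ of (\ref{mutilda}), I would then argue by continuity that $\lambda$ can be taken negative but close to $0$ (with $K_0$ close to $1$, later enlarged for Task (ii)) so that $\Gamma(t,\lambda)\ge0$. Positive regressivity of $\lambda$ and the hypothesis $1-\widetilde\mu(t)p(t)\ge0$ keep $e_\lambda$ positive and validate the jump computation below.

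With $\lambda$ and $K_0$ fixed, Task (iii) mirrors the two-case analysis of Proposition~\ref{pro1}. Setting $M:=\{t\in(t_0,\infty)_{\mathbb T}:x(t)\ge\psi(t)\}$ and $\xi:=\inf M$, in the left-scattered case I would expand $x(\xi)=x(\rho(\xi))+\mu(\rho(\xi))\,x^\Delta(\rho(\xi))$, insert (\ref{Ineq1}), bound each delayed term $x^\ell(\delta_-(h_i,\rho(\xi)))$ by $\psi^\ell(\delta_-(h_i,\rho(\xi)))$ using $x<\psi$ to the left of $\xi$, and invoke $1-\widetilde\mu p\ge0$ to keep the coefficient of $x(\rho(\xi))$ nonnegative; the inequality $\Gamma\ge0$ then forces $x(\xi)<\psi(\xi)$, a contradiction. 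In the left-dense case I would pass to the lower $\Delta$-derivative (\ref{dini-}) exactly as in Proposition~\ref{pro1}, again contradicting $x(\xi)=\psi(\xi)$.

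The main obstacle is Task (i) in the genuinely sublinear regime $\ell<1$, namely enforcing $\Gamma(t,\lambda)\ge0$ \emph{uniformly in} $t$. When $\ell=1$ the critical factor collapses to $e_\lambda(\delta_-(h_i,t),t)$, which tends to $1$ as $\lambda\to0^-$, so continuity from $\Gamma(\cdot,0)\ge L>0$ closes the argument at once. For $\ell<1$, however, writing $e_\lambda(\delta_-(h_i,t),t_0)=e_\lambda(\delta_-(h_i,t),t)\,e_\lambda(t,t_0)$ isolates a factor $\bigl(e_\lambda(t,t_0)\bigr)^{\ell-1}$ which grows without bound as $t\to\infty$ because $e_\lambda(t,t_0)\to0$; taming this term is the real difficulty, and the proof must localize in $t$, permit $\lambda$ to depend on $t$, or exploit additional boundedness of $x$ that is invisible in the linear reduction. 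I would concentrate my effort precisely there.
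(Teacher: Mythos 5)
Your overall strategy --- compare $x$ with an exponential super-solution $K_0e_{\lambda}(t,t_0)$, fix $K_0>1$ on the initial interval $[\delta_-(h_r,t_0),t_0]_{\mathbb{T}}$, and propagate by a minimal-counterexample argument --- is exactly the paper's; your Task (iii) is precisely what Propositions \ref{pro1} and \ref{pro2} package, and the paper simply invokes Proposition \ref{pro2} rather than re-running the two-case analysis. The problem is that you stop at the one step that actually carries the theorem: you never produce $\lambda$ when $\ell<1$, and you say so yourself. The obstacle you identify (the factor $e_{\lambda}^{\ell-1}(t,t_0)$ blowing up as $t\to\infty$) is an obstacle only if you insist on a $\lambda$ that is constant, or bounded away from $0$, uniformly in $t$. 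The theorem asks for no such thing, and the paper takes precisely the escape route you list but do not pursue: it lets $\lambda$ depend on $t$. Concretely, for each \emph{fixed} $t$ it forms the characteristic function
\[
P(t,k)=\bigl(k+p(t)\bigr)e_{k}(t,\delta_-(h_r,t))\,e_{k}^{1-\ell}(\delta_-(h_r,t),t_0)-K^{\ell-1}\sum_{i=0}^{r}q_i(t)\,e_{k}^{\ell}(\delta_-(h_i,t),\delta_-(h_r,t))
\]
on the set $S(t)=\{k<0:1+\widetilde{\mu}(t)k>0\}$, checks $P(t,0)=p(t)-K^{\ell-1}\sum_{i=0}^{r}q_i(t)>0$ (only \emph{pointwise} positivity of $p-\sum q_i$ is needed here, since $K>1$ and $\ell\le 1$ give $K^{\ell-1}\le 1$), and shows that $P(t,k)$ tends to a limit $\le -K^{\ell-1}q_r(t)<0$ as $k$ approaches the left endpoint of $S(t)$ --- this is exactly where the hypotheses $q_r(t)>0$ and $1-\widetilde{\mu}(t)p(t)\ge 0$ do their work, neither of which appears anywhere in your Task (i). The intermediate value theorem then yields a largest root, and $\lambda(t)$ is defined to be that root. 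Because the argument is pointwise in $t$, the case $\ell<1$ costs nothing extra: for fixed $t$ the power of $e_k(\cdot,t_0)$ that worries you is just one fixed positive number.

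Two further remarks. First, your inference that $L:=\inf_t\bigl(p-\sum_{i=0}^{r}q_i\bigr)>0$ follows from (\ref{Cond1}) together with boundedness is not valid --- a positive bounded function can have infimum $0$ --- but the paper never needs such a uniform bound, precisely because its construction of $\lambda$ is pointwise. Second, once $\lambda$ is a genuine function of $t$, the super-solution must be read as the time-scale exponential of the function $\lambda$, for which $e_{\lambda}^{\Delta}(t,t_0)=\lambda(t)e_{\lambda}(t,t_0)$ still holds; your Tasks (ii) and (iii) survive this change unaltered, so the repair to your write-up is localized: replace your Task (i) with the pointwise root construction above.
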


\begin{proof}
Consider the delay dynamic equation%
\begin{equation}
y^{\Delta}(t)=-p(t)y(t)+\sum_{i=0}^{r}q_{i}(t)y^{\ell}(\delta_{-}%
(h_{i},t)),\ \ t\in\lbrack t_{0},\infty)_{\mathbb{T}} \label{Eq.1}%
\end{equation}
We look for a solution of equation (\ref{Eq.1}) in the form $e_{\lambda
}(t,t_{0})$, where $\lambda:\mathbb{T\rightarrow(-\infty},0)$ is positively
regressive (i.e. $1+\mu(t)\lambda(t)>0$) and rd-continuous. First note that%
\[
e_{\lambda}^{\Delta}(t,t_{0})=\lambda(t)e_{\lambda}(t,t_{0}).
\]
For a given $K>1$, the function $Ke_{\lambda}(t,t_{0})$ is a solution of
(\ref{Eq.1}) if and only if $\lambda$ is a root of the characteristic
polynomial $P(t,\lambda)$ defined by%
\begin{align}
P(t,\lambda)  &  :=\left(  \lambda+p(t)\right)  e_{\lambda}(t,\delta_{-}%
(h_{r},t))e_{\lambda}^{1-\ell}(\delta_{-}(h_{r},t),t_{0})\nonumber\\
&  -K^{\ell-1}\sum_{i=0}^{r}q_{i}(t)e_{\lambda}^{\ell}(\delta_{-}%
(h_{i},t),\delta_{-}(h_{r},t)). \label{cep1}%
\end{align}
For each fixed $t\in\lbrack t_{0},\infty)_{\mathbb{T}}$ define the set%
\begin{equation}
S(t):=\left\{  k\in(-\infty,0):1+\widetilde{\mu}(t)k>0\right\}  .
\label{set s(t)}%
\end{equation}
It follows from Lemma \ref{remark osc} that if $k$ is a scalar in $S(t)$, then
$0<1+\widetilde{\mu}(t)k\leq1+\mu(u)k$ for all $u\in$ $\left[  \delta
_{-}(h_{r},t_{0}),t\right]  _{\mathbb{T}}$ and
\begin{equation}
0<e_{k}(\tau,s)\leq\exp(k(\tau-s)), \label{exp2}%
\end{equation}
for all $\tau\in$ $\left[  \delta_{-}(h_{r},t_{0}),t\right]  _{\mathbb{T}}$
with $\tau\geq s$. It is obvious from (\ref{exp}) and (\ref{cylinder}) that
for each fixed $t\in\lbrack t_{0},\infty)_{\mathbb{T}}$ the function $P(t,k)$
is continuous with respect to $k$ in $S(t)$. Since $e_{0}(t,t_{0})=1$ we have%
\begin{equation}
P(t,0)=p(t)-K^{\ell-1}\sum_{i=0}^{r}q_{i}(t)>0. \label{P1}%
\end{equation}
\newline Let $t\in\lbrack t_{0},\infty)_{\mathbb{T}}$ be fixed. If the
interval $\left[  \delta_{-}(h_{r},t_{0}),t\right]  _{\mathbb{T}}$ has no any
right scattered points, then $\widetilde{\mu}(t)=0$ and $S(t)=(-\infty,0)$. By
(\ref{exp2}), we get%
\[
\lim\limits_{k\rightarrow-\infty}e_{k}(t,s)=0,
\]
and hence,%
\[
\lim\limits_{k\rightarrow-\infty}P(t,k)=-K^{\ell-1}q_{r}(t)<0.
\]
If the interval $\left[  \delta_{-}(h_{r},t_{0}),t\right]  _{\mathbb{T}}$ has
some right scattered points (i.e. if $\widetilde{\mu}(t)>0$), then we have
$S(t)=(-\frac{1}{\widetilde{\mu}(t)},0)$. For all $k\in(-\frac{1}%
{\widetilde{\mu}(t)},0)$ we have $e_{k}(t,s)>0$. Since $1-\widetilde{\mu
}(t)p(t)\geq0$ for all $t\in\lbrack t_{0},\infty)_{\mathbb{T}}$, we obtain%
\begin{align*}
\lim\limits_{k\rightarrow-\frac{1}{\widetilde{\mu}(t)}^{+}}P(t,k)  &  =\left(
-\frac{1}{\widetilde{\mu}(t)}+p(t)\right)  \lim\limits_{k\rightarrow-\frac
{1}{\widetilde{\mu}(t)}^{+}}\left[  e_{k}(t,\delta_{-}(h_{r},t))e_{k}^{1-\ell
}(\delta_{-}(h_{r},t),t_{0})\right] \\
&  -K^{\ell-1}\sum_{i=0}^{r-1}q_{i}(t)\lim\limits_{k\rightarrow-\frac
{1}{\widetilde{\mu}(t)}^{+}}e_{k}^{\ell}(\delta_{-}(h_{i},t),\delta_{-}%
(h_{r},t))-K^{\ell-1}q_{r}(t)\\
&  <-K^{\ell-1}q_{r}(t)<0.
\end{align*}
Therefore, for each fixed $t\in\lbrack t_{0},\infty)_{\mathbb{T}}$, we obtain%
\begin{equation}
0>-K^{\ell-1}q_{r}(t)\geq\left\{
\begin{array}
[c]{cc}%
\lim\limits_{k\rightarrow-\frac{1}{\widetilde{\mu}(t)}^{+}}P(t,k) & \text{if
}\widetilde{\mu}(t)>0\\
\lim\limits_{k\rightarrow-\infty}P(t,k) & \text{if }\widetilde{\mu}(t)=0
\end{array}
\right.  . \label{P2}%
\end{equation}
It follows from the continuity of $P$ in $k$ and (\ref{P1}-\ref{P2}) that for
each fixed $t\in\lbrack t_{0},\infty)_{\mathbb{T}}$, there exists a largest
element $k_{0}$ of the set $S(t)$ such that%
\[
P(t,k_{0})=0.
\]
Using all these largest elements we can construct a positively regressive
function $\lambda:[\delta_{-}(h_{r},t_{0}),\infty)_{\mathbb{T}}%
\mathbb{\rightarrow(-\infty},0)$ by%
\begin{equation}
\lambda(t):=\max\left\{  k\in S(t):P(t,k)=0\right\}  \label{lambda}%
\end{equation}
so that for a given $K>1$, $y(t)=Ke_{\lambda}(t,t_{0})$ is a solution to
(\ref{Eq.1}). \newline If $y(t)$ be a solution of (\ref{Eq.1}), $x(t)$
satisfies (\ref{Ineq1}), and $x(t)\leq y(t)$ for all $t\in\lbrack\delta
_{-}(h_{r},t_{0}),t_{0}]_{\mathbb{T}}$, then by Proposition \ref{pro2} the
inequality $x(t)\leq y(t)$ holds for all $t\in\lbrack t_{0},\infty
)_{\mathbb{T}}$. For a given $K>1$, we have%
\[
\inf\limits_{t\in\lbrack\delta_{-}(h_{r},t_{0}),t_{0}]_{\mathbb{T}}%
}Ke_{\lambda}(t,t_{0})=K,
\]
hence, by choosing a $K_{0}>1$ with%
\[
K_{0}>\sup\limits_{t\in\lbrack\delta_{-}(h_{r},t_{0}),t_{0}]_{\mathbb{T}}%
}x(t),
\]
we get%
\[
x(t)<K_{0}e_{\lambda}(t,t_{0})\text{ for all }t\in\lbrack\delta_{-}%
(h_{r},t_{0}),t_{0}]_{\mathbb{T}}.
\]
It follows on the basis of Proposition \ref{pro2} that the inequality
\[
x(t)\leq K_{0}e_{\lambda}(t,t_{0})
\]
holds for all $t\in\lbrack t_{0},\infty)_{\mathbb{T}}$. This completes the proof.
\end{proof}

In next two examples, we apply Theorem \ref{thm1} to the time scales
$\mathbb{T}=\mathbb{Z}$ and $\mathbb{T}=q^{\mathbb{N}}$ to derive some results
for difference and $q-$difference inequalities.

\begin{example}
\label{Ex1}Let $\mathbb{T=Z}$; $t_{0}=0$; $\delta_{-}(h_{i},t)=t-h_{i}$,
$h_{i}\in\mathbb{N}$, $i=1,2,...,r-1$; $h_{r}\in\mathbb{Z}^{+}$, and
$0=h_{0}<h_{1}<...<h_{r}$. \ Assume that $p$ and $q_{i}\geq0$, $i=0,1,2,...,r$%
, are the scalars satisfying $q_{r}>0$ and
\[
\sum_{i=0}^{r}q_{i}<p\leq1.
\]
Then the equation (\ref{Eq.1}) becomes%
\begin{equation}
\Delta y(t)=-py(t)+\sum_{i=0}^{r}q_{i}y^{\ell}(t-h_{i}),\ \ t\in\left\{
0,1,...\right\}  . \label{eqdiif}%
\end{equation}
The characteristic polynomial and the set $S(t)$ given by (\ref{cep1}) and
(\ref{set s(t)}) turn into%
\[
P(t,\lambda)=(\lambda+p)(1+\lambda)^{h_{r}}(1+\lambda)^{\left(  1-\ell\right)
\left(  t-h_{r}\right)  }-K^{\ell-1}\sum_{i=0}^{r}q_{i}(1+\lambda
)^{\ell\left(  h_{r}-h_{i}\right)  }%
\]
and%
\[
S(t)=(-1,0)\text{ for all }t\in\left\{  0,1,...\right\}  ,
\]
respectively. Let $\left\{  x(t)\right\}  $, $t\in\lbrack-h_{r},\infty
)_{\mathbb{Z}}$ be a sequence satisfying the inequality
\[
\Delta x(t)\leq-px(t)+\sum_{i=0}^{r}q_{i}x^{\ell}(t-h_{i}),\ \ t\in\left\{
0,1,...\right\}  .
\]
Then by Theorem \ref{thm1}, we conclude that there exists a constant $K_{0}>1$
such that%
\[
x(t)<K_{0}\prod_{s=0}^{t-1}(1+\lambda_{0}(s))\text{, }t\in\left\{
0,1,...\right\}  ,
\]
where $\lambda_{0}:\mathbb{Z\rightarrow}(-1,0)$ is a positively regressive
function defined by%
\begin{align*}
\lambda_{0}(t)  &  =\max\left\{  \nu\in(-1,0):\left(  \nu+p\right)
(1+\nu)^{h_{r}}(1+\nu)^{\left(  1-\ell\right)  \left(  t-h_{r}\right)
}\right. \\
&  -\left.  K^{\ell-1}\sum_{i=0}^{r}q_{i}(1+\nu)^{\ell\left(  h_{r}%
-h_{i}\right)  }=0\right\}  .
\end{align*}

\end{example}

\begin{remark}
\label{rem imp1}Example \ref{Ex1} shows that the result in \cite[Theorem
2.1]{udpin} and \cite[Theorem 2.1]{agarwal} are the particular cases of
Theorem \ref{thm1} when $\mathbb{T=Z}$. Moreover, unlike the ones in
\cite[Theorem 2.1]{udpin} and \cite[Theorem 2.1]{agarwal}, the coefficients
$p$\ and $q_{i}$, $i=0,1,...,r$, of the dynamic inequality considered in
Theorem \ref{thm1} are allowed to depend on the parameter $t$. Hence, even for
the particular case $\mathbb{T=Z}$ we have a more general result.
\end{remark}

\begin{example}
Let $\mathbb{T=}q^{\mathbb{N}}:=\left\{  q^{n}:n\in\mathbb{N}\text{ and
}q>1\right\}  $, $t_{0}=1$, $\delta_{-}(h_{i},t)=t/h_{i}$, where $h_{i}\in
q^{\mathbb{N}}$, $1=h_{0}<h_{1}<...<h_{r}$. Let $x$ be a function satisfying
the inequality%
\[
D_{q}x(t)\leq-p(t)x(t)+\sum_{i=0}^{r}\zeta_{i}(t)x^{\ell}(\frac{t}{h_{i}%
}),\ \ t\in q^{\mathbb{N}}\text{,}%
\]
where $D_{q}x(t)$ is defined as in (\ref{q derivative}). Let%
\[
q^{\mathbb{Z}}:=\left\{  q^{n}:n\in\mathbb{Z}\text{ and }q>1\right\}  .
\]
Suppose that $p$ and $\zeta_{i}$, $i=0,1,...,r$, are continuous and bounded
functions satisfying $1-p(t)(q-1)t\geq0$; $\zeta_{i}(t)\geq0$, $i=1,...,r-1$;
$\zeta_{r}(t)>0$, and%
\[
p(t)-\sum_{i=0}^{r}\zeta_{i}(t)>0
\]
for all $t\in\lbrack1,\infty)\cap q^{\mathbb{Z}}$. Then there exists a
constant $K_{0}>1$ such that%
\[
x(t)\leq K_{0}%
{\displaystyle\prod\limits_{s\in\lbrack1,t)\cap q^{\mathbb{N}}}}
\left(  1+\lambda(s)(q-1)s\right)  \text{ for all }t\in q^{\mathbb{N}},
\]
in which $\lambda$ denotes the function defined by%
\[
\lambda(t):=\max\left\{  k\in\left(  -1/(q-1)t,0\right)  :P(t,k)=0\right\}
,\text{ \ \ }t\in q^{\mathbb{N}}%
\]
where%
\begin{align*}
P(t,k)  &  :=\left(  k+p(t)\right)
{\displaystyle\prod\limits_{s\in\lbrack\frac{t}{h_{r}},t)\cap q^{\mathbb{Z}}}}
\left(  1+k(q-1)s\right) \\
&  \times A(t,k)\\
&  -K^{\ell-1}\sum_{i=0}^{r}\zeta_{i}(t)%
{\displaystyle\prod\limits_{s\in\lbrack\frac{t}{h_{r}},\frac{t}{h_{i}})\cap
q^{\mathbb{Z}}}}
\left(  1+k(q-1)s\right)  ^{\ell},
\end{align*}
where%
\[
A(t,k)=\left\{
\begin{array}
[c]{cc}%
{\displaystyle\prod\limits_{s\in\lbrack1,\frac{t}{h_{r}})\cap q^{\mathbb{Z}}}}
\left(  1+k(q-1)s\right)  ^{1-\ell} & \text{if }t>h_{r}\\%
{\displaystyle\prod\limits_{s\in\lbrack\frac{t}{h_{r}},1)\cap q^{\mathbb{Z}}}}
\left(  1+k(q-1)s\right)  ^{\ell-1} & \text{if }t<h_{r}%
\end{array}
\right.  .
\]

\end{example}

\begin{theorem}
\label{thm new halanay}Let $\tau\in\lbrack t_{0},\infty)_{\mathbb{T}}$ be a
constant such that there exists a delay function $\delta_{-}(\tau,t)$ on
$\mathbb{T}$. Let $x$ be a function satisfying the inequality%
\[
x^{\Delta}(t)\leq-p(t)x(t)+q(t)\sup\limits_{s\in\left[  \delta_{-}%
(\tau,t),t\right]  }x^{\ell}(s),\ \ t\in\lbrack t_{0},\infty)_{\mathbb{T}%
}\text{,}%
\]
where $\ell\in(0,1]$ is a constant. Suppose that $p$ and $q$ are the
continuous and bounded functions satisfying $p(t)>q(t)>0$ and $1-\widetilde
{\mu}(t)p(t)\geq0$ for all $t\in\lbrack t_{0},\infty)_{\mathbb{T}}$. Then
there exists a constant $M_{0}>0$ such that every solution $x$ to Eq.
(\ref{3.1}) satisfies
\[
x(t)\leq M_{0}e_{\widetilde{\lambda}}(t,t_{0})\text{,}%
\]
where $\widetilde{\lambda}$ is a positively regressive function chosen as in
(\ref{lambda tilda}).
\end{theorem}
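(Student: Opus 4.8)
The plan is to mirror the proof of Theorem \ref{thm1}, the one genuinely new ingredient being that for a \emph{decreasing} trial solution the supremum is attained at the left endpoint of the delay window. First I would introduce the companion delay dynamic equation
\[
y^{\Delta}(t)=-p(t)y(t)+q(t)\sup\limits_{s\in\left[  \delta_{-}(\tau,t),t\right]  }y^{\ell}(s)
\]
and seek a solution of the form $y(t)=Me_{\widetilde{\lambda}}(t,t_{0})$ with $M>1$ and $\widetilde{\lambda}:[t_{0},\infty)_{\mathbb{T}}\rightarrow(-\infty,0)$ positively regressive and $rd$-continuous. Since $\widetilde{\lambda}<0$ is positively regressive, $e_{\widetilde{\lambda}}(\cdot,t_{0})$ is positive and satisfies $e_{\widetilde{\lambda}}^{\Delta}(\cdot,t_{0})=\widetilde{\lambda}\,e_{\widetilde{\lambda}}(\cdot,t_{0})<0$, so it is strictly decreasing; as $\ell>0$, the map $s\mapsto y^{\ell}(s)$ is decreasing on $[\delta_{-}(\tau,t),t]$, whence the supremum is attained at $s=\delta_{-}(\tau,t)$ and equals $M^{\ell}e_{\widetilde{\lambda}}^{\ell}(\delta_{-}(\tau,t),t_{0})$.

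Substituting this into the companion equation, dividing by $Me_{\widetilde{\lambda}}(t,t_{0})>0$, and using the semigroup identity (v) of Lemma \ref{lemma2.3} to factor $e_{\widetilde{\lambda}}(t,t_{0})=e_{\widetilde{\lambda}}(t,\delta_{-}(\tau,t))e_{\widetilde{\lambda}}(\delta_{-}(\tau,t),t_{0})$, I find that $Me_{\widetilde{\lambda}}(t,t_{0})$ is a solution if and only if $\widetilde{\lambda}$ is a root of
\[
P(t,\widetilde{\lambda}):=\left(\widetilde{\lambda}+p(t)\right)e_{\widetilde{\lambda}}(t,\delta_{-}(\tau,t))\,e_{\widetilde{\lambda}}^{1-\ell}(\delta_{-}(\tau,t),t_{0})-M^{\ell-1}q(t).
\]
Fixing $t$ and working over $S(t)=\{k\in(-\infty,0):1+\widetilde{\mu}(t)k>0\}$, I would establish exactly as in Theorem \ref{thm1} that $P(t,0)=p(t)-M^{\ell-1}q(t)>0$ (using $M>1$ and $\ell\leq1$, so $M^{\ell-1}\leq1$, together with $p(t)>q(t)$), while the limit at the boundary of $S(t)$ is negative: if $\widetilde{\mu}(t)=0$ the product $(\widetilde{\lambda}+p(t))e_{\widetilde{\lambda}}(t,\delta_{-}(\tau,t))$ tends to $0$ as $k\rightarrow-\infty$ by (\ref{exp2}), leaving $-M^{\ell-1}q(t)<0$; and if $\widetilde{\mu}(t)>0$ the factor $-1/\widetilde{\mu}(t)+p(t)\leq0$ (from $1-\widetilde{\mu}(t)p(t)\geq0$) times a nonnegative limit forces $\lim_{k\rightarrow-1/\widetilde{\mu}(t)^{+}}P(t,k)\leq-M^{\ell-1}q(t)<0$. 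Continuity of $P(t,\cdot)$ then yields a largest root, and I set
\[
\widetilde{\lambda}(t):=\max\left\{k\in S(t):P(t,k)=0\right\},
\]
which is negative and positively regressive; this is the choice referred to in the statement.

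Finally I would fix $M_{0}>1$ with $M_{0}>\sup_{s\in[\delta_{-}(\tau,t_{0}),t_{0}]}x(s)$; since $e_{\widetilde{\lambda}}(\cdot,t_{0})$ is decreasing with value $1$ at $t_{0}$, we get $M_{0}e_{\widetilde{\lambda}}(t,t_{0})\geq M_{0}>x(t)$ on the initial interval $[\delta_{-}(\tau,t_{0}),t_{0}]_{\mathbb{T}}$, and the comparison principle then delivers $x(t)\leq M_{0}e_{\widetilde{\lambda}}(t,t_{0})$ for all $t\geq t_{0}$. I expect the main obstacle to be that Propositions \ref{pro1} and \ref{pro2} are phrased for a nonlinearity $g$ of finitely many delayed values rather than for a supremum. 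I would resolve this by observing that the functional $\omega\mapsto\sup_{s\in[\delta_{-}(\tau,t),t]}\omega^{\ell}(s)$ possesses precisely the monotonicity used in those proofs, namely that pointwise domination of the argument on $[\delta_{-}(\tau,\xi),\xi)$ forces domination of the supremum; consequently the left-dense and left-scattered case analyses in the proof of Proposition \ref{pro1} carry over verbatim with the $g$-term replaced by the supremum term, and the sequential approximation in Proposition \ref{pro2} follows identically, so the comparison step is justified.
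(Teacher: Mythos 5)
Your proposal follows essentially the same route as the paper: reduce to the companion equation $y^{\Delta}=-py+q\sup y^{\ell}$, observe that for a decreasing exponential trial solution the supremum is attained at $\delta_{-}(\tau,t)$, extract the characteristic polynomial, get the largest negative root in $S(t)$ by the sign change between $k=0$ and the boundary of $S(t)$, and close with the comparison principle after choosing $M_{0}$ to dominate $x$ on the initial interval. If anything you are slightly more careful than the paper (your exponent $M^{\ell-1}$ is the correct one, and you explicitly note why Propositions \ref{pro1}--\ref{pro2} still apply when the finite-delay nonlinearity $g$ is replaced by a supremum), so the proposal is correct and matches the paper's argument.
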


\begin{proof}
We proceed as we did in the proof of Theorem \ref{thm1}. Consider the dynamic
equation%
\begin{equation}
y^{\Delta}(t)=-py(t)+q\sup_{s\in\left[  \delta_{-}(\tau,t),t\right]  }y^{\ell
}(s),\ \ t\in\lbrack t_{0},\infty)_{\mathbb{T}}\text{.} \label{eqsup}%
\end{equation}
For a given $M>1$, $Me_{\lambda}(t,t_{0})$ is a solution of (\ref{Eq.1}) if
and only if $\lambda$ is a root of the characteristic polynomial
$\widetilde{P}(t,\lambda)$ defined by%
\[
\widetilde{P}(t,\lambda):=\left(  \lambda+p(t)\right)  e_{\lambda}%
(t,t_{0})-M^{\ell}q(t)\sup_{s\in\left[  \delta_{-}(\tau,t),t\right]
}e_{\lambda}^{\ell}(s,t_{0}).
\]
For each fixed $t\in\lbrack t_{0},\infty)_{\mathbb{T}}$ define the set%
\[
S(t):=\left\{  k\in(-\infty,0):1+\widetilde{\mu}(t)k>0\right\}  .
\]
It is obvious that for each fixed $t\in\lbrack t_{0},\infty)_{\mathbb{T}}$ and
for all $k\in S(t)$ we have%
\[
\widetilde{P}(t,k)=\left(  k+p(t)\right)  e_{k}(t,t_{0})-M^{\ell}%
q(t)e_{k}^{\ell}(\delta_{-}(\tau,t),t_{0})\text{. }%
\]
As we did in the proof of Theorem \ref{thm1}, one may easily show that for
each $t\in\lbrack t_{0},\infty)_{\mathbb{T}}$, there exists a largest element
of $S(t)$ such that $P(t,k)=0$. Using these largest elements we can define a
positively regressive function $\widetilde{\lambda}:[\delta_{-}(h_{r}%
,t_{0}),\infty)_{\mathbb{T}}\mathbb{\rightarrow(-\infty},0)$ by%
\begin{equation}
\widetilde{\lambda}(t):=\max\left\{  k\in S(t):\widetilde{P}(t,k)=0\right\}
\label{lambda tilda}%
\end{equation}
so that for a given $M>1$ $y(t)=Me_{\widetilde{\lambda}}(t,t_{0})$ is a
solution to (\ref{eqsup}). The rest of the proof can be done similar to that
of Theorem \ref{thm1}.
\end{proof}

\begin{remark}
\label{rem imp 2}Notice that Theorem \ref{thm new halanay} gives Lemma
\ref{lem halanay} in the particular case when $\mathbb{T=R}$ and $\ell=1$.
Moreover, since there is no nonnegativity condition on the function $x$,
Theorem \ref{thm new halanay} provides not only a generalization but also a
relaxation of Theorem \ref{gopalsamy}. Similar, relaxation is valid also for
the discrete case (see \cite[Theorem 2.1]{liz}).
\end{remark}

We finalize this section by giving a result for functions satisfying the
dynamic inequality%
\begin{equation}
x^{\Delta}(t)\leq-p(t)x(t)+%
{\displaystyle\prod\limits_{i=0}^{r}}
\beta_{i}(t)x^{\alpha_{i}}(\delta_{-}(h_{i},t)), \label{newineq}%
\end{equation}
where $\alpha_{i}\in\mathbb{(}0,\infty)$, $i=0,1,...,r$, are the scalars with
$\sum_{i=0}^{r}\alpha_{i}=1$. Let the characteristic polynomial $Q(t,k)$ and
the set $S(t)$ be defined by
\[
Q(t,k):=\left(  \lambda+p\right)  e_{\lambda}(t,t_{0})-%
{\displaystyle\prod\limits_{i=0}^{r}}
\beta_{i}e_{\lambda}^{\alpha_{i}}(\delta_{-}(h_{i},t),t_{0})
\]
and (\ref{set s(t)}), respectively. Applying the similar procedure to that
used in the proof of Theorem \ref{thm1} we arrive at the next result.

\begin{theorem}
\label{thm2}Let $x$ be a $\Delta-$differentiable function satisfying
(\ref{newineq}), where $\alpha_{i}\in\mathbb{(}0,\infty)$, $i=0,1,...,r$, are
the scalars with $\sum_{i=0}^{r}\alpha_{i}=1$; $p$ and $\beta_{i}$,
$i=0,1,...,r$, are continuous functions with the property that $1-\widetilde
{\mu}(t)p(t)\geq0$; $\beta_{i}(t)>0$, $i=0,1,...,r$, for all $t\in\lbrack
t_{0},\infty)_{\mathbb{T}}$. Suppose that
\[
p(t)-%
{\displaystyle\prod\limits_{i=0}^{r}}
\beta_{i}(t)>0
\]
for all $t\in\lbrack t_{0},\infty)_{\mathbb{T}}$. Then there exists a constant
$L_{0}>0$ such that%
\[
x(t)\leq L_{0}e_{\gamma}(t,t_{0})\text{ for }t\in\lbrack t_{0},\infty
)_{\mathbb{T}},
\]
where $\gamma:[t_{0},\infty)_{\mathbb{T}}\mathbb{\rightarrow(-\infty},0)$ is a
positively regressive function given by%
\begin{equation}
\gamma(t):=\max\left\{  k\in S(t):Q(t,k)=0\right\}  . \label{gamma}%
\end{equation}

\end{theorem}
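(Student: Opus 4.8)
The plan is to follow the template already established in the proof of Theorem \ref{thm1}, since Theorem \ref{thm2} is the product-form analogue of the sum-form inequality (\ref{Ineq1}). First I would consider the associated delay dynamic equation
\[
y^{\Delta}(t)=-p(t)y(t)+\prod_{i=0}^{r}\beta_{i}(t)y^{\alpha_{i}}(\delta_{-}(h_{i},t)),\quad t\in\lbrack t_{0},\infty)_{\mathbb{T}},
\]
and search for a solution of the form $Le_{\gamma}(t,t_{0})$ with $L>1$ and $\gamma$ positively regressive and negative. Substituting this ansatz and using $e_{\gamma}^{\Delta}(t,t_{0})=\gamma(t)e_{\gamma}(t,t_{0})$ together with the exponents summing to one, $\sum_{i=0}^{r}\alpha_{i}=1$, the factor $L$ collapses (since $L=L^{\sum\alpha_{i}}$) so that $Le_{\gamma}(t,t_{0})$ solves the equation precisely when $\gamma$ is a root of $Q(t,\cdot)$; this is exactly why $Q(t,k)$ carries no power of $L$ and why the hypothesis $\sum\alpha_{i}=1$ is essential.

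Next I would establish that for each fixed $t\in\lbrack t_{0},\infty)_{\mathbb{T}}$ the characteristic function $Q(t,\cdot)$ has a root in $S(t)$. By continuity of the exponential in its parameter, $Q(t,\cdot)$ is continuous on $S(t)$, and at $k=0$ we use Lemma \ref{lemma2.3}(i) to get $Q(t,0)=p(t)-\prod_{i=0}^{r}\beta_{i}(t)>0$ by the standing hypothesis. At the other end of $S(t)$ I would split into the two cases exactly as in Theorem \ref{thm1}: if $\widetilde{\mu}(t)=0$ then $S(t)=(-\infty,0)$ and the bound (\ref{exp2}) forces each $e_{k}(\delta_{-}(h_{i},t),t_{0})\to 0$ as $k\to-\infty$, while if $\widetilde{\mu}(t)>0$ then $S(t)=(-1/\widetilde{\mu}(t),0)$ and the condition $1-\widetilde{\mu}(t)p(t)\geq0$ drives the leading factor $(k+p(t))$ negative in the limit $k\to -1/\widetilde{\mu}(t)^{+}$; in either case $Q(t,k)$ becomes negative near the left endpoint. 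The intermediate value theorem then yields a root, and I would take $\gamma(t)$ to be the largest such root, giving the positively regressive function in (\ref{gamma}).

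Finally, with $y(t)=L_{0}e_{\gamma}(t,t_{0})$ a solution of the dynamic equation, I would invoke the comparison machinery. To apply Proposition \ref{pro2} I must exhibit the right-hand side as $f(t,x(t),g(\dots))$ with $f$ increasing in its third slot and nondecreasing in its second, and $g$ increasing in each argument; here $f(t,u,v)=-p(t)u+v$ and $g(u_{0},\dots,u_{r})=\prod_{i=0}^{r}\beta_{i}(t)u_{i}^{\alpha_{i}}$, which is increasing in each $u_{i}$ because $\beta_{i}(t)>0$ and $\alpha_{i}>0$ (on the nonnegative values produced by the comparison). I would then choose $L_{0}>1$ large enough that $L_{0}>\sup_{s\in[\delta_{-}(h_{r},t_{0}),t_{0}]_{\mathbb{T}}}x(s)$, so that using $\inf_{s\in[\delta_{-}(h_{r},t_{0}),t_{0}]_{\mathbb{T}}}L_{0}e_{\gamma}(s,t_{0})=L_{0}$ the initial comparison $x(t)<L_{0}e_{\gamma}(t,t_{0})$ holds on the initial segment; Proposition \ref{pro2} then propagates $x(t)\leq L_{0}e_{\gamma}(t,t_{0})$ to all of $[t_{0},\infty)_{\mathbb{T}}$. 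The main obstacle I anticipate is the sign and monotonicity bookkeeping for the product nonlinearity $g$: unlike the sum in Theorem \ref{thm1}, one must verify that the product of powers is genuinely monotone in each coordinate on the relevant (nonnegative) range and that the limit computations at the endpoint of $S(t)$ behave correctly when several exponents $\alpha_{i}$ are fractional, so I would carry out those estimates carefully rather than merely citing the earlier proof.
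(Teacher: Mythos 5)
Your proposal matches the paper's intended argument: the paper gives no separate proof of Theorem \ref{thm2}, merely defining $Q(t,k)$ and $S(t)$ and stating that ``the similar procedure to that used in the proof of Theorem \ref{thm1}'' applies, which is exactly the root-finding-plus-comparison template you reconstruct (including the key observation that $\sum_{i=0}^{r}\alpha_{i}=1$ makes the constant $L$ cancel so that $Q$ is $L$-free). The loose ends you flag at the end --- the rate comparison at the left endpoint of $S(t)$ when all exponential factors tend to zero simultaneously, and the monotonicity/sign bookkeeping for the fractional-power product $g$ --- are genuine and are not addressed by the paper either, so carrying them out as you propose would only strengthen the published argument.
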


Note that Theorem \ref{thm2} gives \cite[Theorem 2.2]{udpin} in the particular
case when $\mathbb{T=Z}$, $t_{0}=0$, $\delta_{-}(h_{i},t)=t-h_{i}$,
$i=0,1,2,..r$.

\section{Global stability of nonlinear dynamic equations}

In this section, by means of Halanay type inequalities we gave in the previous
section, we propose some sufficient conditions guaranteeing global stability
of nonlinear dynamic equations in the form%
\begin{equation}
x^{\Delta}(t)=-p(t)x(t)+F(t,x(t),x(\delta_{-}(h_{1},t)),...,x(\delta_{-}%
(h_{r},t))) \label{3.1}%
\end{equation}
for $t\in\lbrack t_{0},\infty)_{\mathbb{T}}$.

\begin{theorem}
\label{thm3.1}Let $p$\ and $q_{i}$, $i=0,1,...,r$, be continuous and bounded
functions satisfying $1-\widetilde{\mu}(t)p(t)>0$; $q_{i}(t)\geq0$,
$i=0,1,...,r$; $q_{r}(t)>0$ and%
\[
p(t)-\sum_{i=0}^{r}q_{i}(t)>0
\]
for all $t\in\lbrack t_{0},\infty)_{\mathbb{T}}$.\ Let $\ell\in(0,1]$ be a
constant. Assume that there exist scalars $h_{i}\in\lbrack t_{0}%
,\infty)_{\mathbb{T}}$, $i=0,1,...,r$, such that $h_{0}=t_{0}$, $\delta
_{-}(h_{i},t)$, $i=1,...,r$, are delay functions on $\mathbb{T}$, and
\begin{equation}
\left\vert F(t,x(t),x(\delta_{-}(h_{1},t)),...,x(\delta_{-}(h_{r}%
,t)))\right\vert \leq\sum_{i=0}^{r}q_{i}(t)\left\vert x(\delta_{-}%
(h_{i},t))\right\vert ^{\ell} \label{3.2}%
\end{equation}
for all $(t,x(t),x(\delta_{-}(h_{1},t)),...,x(\delta_{-}(h_{r},t)))\in\lbrack
t_{0},\infty)_{\mathbb{T}}\times\mathbb{R}^{r+1}$. Then there exists a
constant $M_{0}>1$ such that every solution $x$ to Eq. (\ref{3.1}) satisfies
\[
\left\vert x(t)\right\vert \leq M_{0}e_{\lambda}(t,t_{0})\text{,}%
\]
where $\lambda$ is a positively regressive function chosen as in (\ref{lambda}).
\end{theorem}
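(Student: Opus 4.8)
The plan is to reduce Theorem \ref{thm3.1} to Theorem \ref{thm1} by a comparison argument applied to the absolute value $\left\vert x(t)\right\vert$. The key observation is that although $x$ need not be nonnegative, the bound (\ref{3.2}) controls the nonlinearity $F$ purely in terms of $\left\vert x(\delta_{-}(h_{i},t))\right\vert^{\ell}$, so I expect $\left\vert x\right\vert$ to satisfy precisely the differential inequality (\ref{Ineq1}) that Theorem \ref{thm1} is built to handle. First I would fix a solution $x$ of (\ref{3.1}) and set $v(t):=\left\vert x(t)\right\vert$. The goal is to show that $v$ satisfies
\[
v^{\Delta}(t)\leq-p(t)v(t)+\sum_{i=0}^{r}q_{i}(t)v^{\ell}(\delta_{-}(h_{i},t))
\]
for all $t\in\lbrack t_{0},\infty)_{\mathbb{T}}$. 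Once this is established, Theorem \ref{thm1} applies verbatim (its hypotheses on $p$, $q_{i}$, $\ell$, the regressivity condition $1-\widetilde{\mu}(t)p(t)\geq0$, and (\ref{Cond1}) are exactly the standing assumptions here), yielding $v(t)\leq M_{0}e_{\lambda}(t,t_{0})$ with $\lambda$ given by (\ref{lambda}), which is the desired conclusion.

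The heart of the argument is the delta-derivative estimate for the absolute value, and this is the step I expect to be the main obstacle. On $\mathbb{T}=\mathbb{R}$ one would simply use $\tfrac{d}{dt}\left\vert x\right\vert\leq\left\vert x'\right\vert$ away from zeros and argue at zeros separately; on a general time scale the difficulty is that at right-scattered points the delta derivative is the difference quotient $\left(v(\sigma(t))-v(t)\right)/\mu(t)$, and one must verify the inequality using the triangle inequality together with the sign structure supplied by $1-\widetilde{\mu}(t)p(t)\geq0$. I would split into two cases. At a right-dense point $t$, the delta derivative coincides with the ordinary one-sided behavior and the estimate $v^{\Delta}(t)\leq\operatorname{sgn}(x(t))\,x^{\Delta}(t)$ (interpreted appropriately when $x(t)=0$) combined with (\ref{3.1}) and (\ref{3.2}) gives the claim directly. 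At a right-scattered point $t$, I would write
\[
v(\sigma(t))=\left\vert x(t)+\mu(t)x^{\Delta}(t)\right\vert=\left\vert(1-\mu(t)p(t))x(t)+\mu(t)F(\cdots)\right\vert,
\]
and here the condition $1-\widetilde{\mu}(t)p(t)\geq0$ (hence $1-\mu(t)p(t)\geq0$) ensures the coefficient of $x(t)$ is nonnegative, so the triangle inequality produces
\[
v(\sigma(t))\leq(1-\mu(t)p(t))v(t)+\mu(t)\sum_{i=0}^{r}q_{i}(t)v^{\ell}(\delta_{-}(h_{i},t)).
\]
Dividing by $\mu(t)$ and rearranging gives exactly the difference-quotient form of the required inequality.

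With the comparison inequality for $v=\left\vert x\right\vert$ in hand, the remainder is bookkeeping: I would invoke Theorem \ref{thm1} with $v$ in place of $x$, noting that the initial data $v$ on $\lbrack\delta_{-}(h_{r},t_{0}),t_{0}\rbrack_{\mathbb{T}}$ is bounded (since $x$, being a solution, is defined and hence bounded on this compact interval), and choosing $M_{0}>1$ larger than $\sup_{s\in\lbrack\delta_{-}(h_{r},t_{0}),t_{0}\rbrack_{\mathbb{T}}}v(s)$ just as in the proof of Theorem \ref{thm1}. This yields $\left\vert x(t)\right\vert=v(t)\leq M_{0}e_{\lambda}(t,t_{0})$ for all $t\in\lbrack t_{0},\infty)_{\mathbb{T}}$, completing the proof. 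The only genuinely delicate point is the sign handling at right-scattered points, where the regressivity hypothesis $1-\widetilde{\mu}(t)p(t)\geq0$ is used in an essential way; everywhere else the reduction to Theorem \ref{thm1} is mechanical.
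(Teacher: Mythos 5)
Your reduction to Theorem \ref{thm1} is the right idea, and your computation at right-scattered points is correct: from (\ref{3.1}) one has $x(\sigma(t))=(1-\mu(t)p(t))x(t)+\mu(t)F(\cdots)$, the coefficient $1-\mu(t)p(t)\geq 1-\widetilde{\mu}(t)p(t)>0$ is nonnegative, and the triangle inequality with (\ref{3.2}) gives the difference-quotient inequality for $v=\vert x\vert$. The gap is exactly at the point you flagged and then waved through: at a right-dense point where $x$ vanishes or changes sign, $v=\vert x\vert$ need not be $\Delta$-differentiable at all (the $\Delta$-derivative at a right-dense point is a genuine two-sided limit, and $\vert x\vert$ has a corner there), so the inequality $v^{\Delta}(t)\leq -p(t)v(t)+\sum_{i=0}^{r}q_{i}(t)v^{\ell}(\delta_{-}(h_{i},t))$ is not even well-formed and Theorem \ref{thm1} cannot be applied ``verbatim'' to $v$. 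To rescue your route you would have to replace $v^{\Delta}$ by the lower $\Delta$-derivative $v^{\Delta_{-}}$ of (\ref{dini-}) (which does always exist and does satisfy the inequality, since at a zero of $x$ the left difference quotients of $\vert x\vert$ are $\leq 0$ while the right-hand side is $\geq 0$) and then check that Proposition \ref{pro2} and Theorem \ref{thm1} remain valid with $\Delta_{-}$ in place of $\Delta$ in their hypotheses. That extension is plausible --- Proposition \ref{pro1} is already stated for $\Delta_{-}$ --- but it is additional work you have not done, not a citation.

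The paper sidesteps this entirely. It writes the solution of (\ref{3.1}) in variation-of-constants form with $\xi=\ominus(-p)$, and then defines the majorant
\[
y(t)=\vert x_{0}\vert e_{\ominus\xi}(t,t_{0})+\int_{t_{0}}^{t}\sum_{i=0}^{r}q_{i}(s)\left\vert x(\delta_{-}(h_{i},s))\right\vert^{\ell}e_{\ominus\xi}(t,\sigma(s))\Delta s
\]
for $t\geq t_{0}$, extended by $\vert x\vert$ on the initial interval. This $y$ dominates $\vert x\vert$, is genuinely $\Delta$-differentiable (differentiation under the integral sign), and satisfies $y^{\Delta}(t)\leq -p(t)y(t)+\sum_{i=0}^{r}q_{i}(t)y^{\ell}(\delta_{-}(h_{i},t))$, so Theorem \ref{thm1} applies to $y$ with no regularity issues. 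Either adopt that construction or supply the Dini-derivative versions of Proposition \ref{pro2} and Theorem \ref{thm1}; as written, your proof is incomplete at the right-dense zeros of $x$.
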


\begin{proof}
Let%
\[
\xi:=\ominus(-p)=\frac{p}{1-\mu p}.
\]
Multiplying both sides of Eq. (\ref{3.1}) by $e_{\xi}(t,t_{0})$ and
integrating the resulting equation from $t_{0}$ to $t$ we get that%
\begin{equation}
x(t)=x_{0}e_{\ominus\xi}(t,t_{0})+\int_{t_{0}}^{t}F(s,x(s),x(\delta_{-}%
(h_{1},s)),...,x(\delta_{-}(h_{r},s)))e_{\ominus\xi}(t,\sigma(s))\Delta s.
\label{new form of 3.1}%
\end{equation}
It is straightforward to show that a solution $x(t)$ to Eq.
(\ref{new form of 3.1}) satisfies (\ref{3.1}). This means every solution of
Eq. (\ref{3.1}) can be rewritten in the form of (\ref{new form of 3.1}). By
using (\ref{3.2}) we obtain%
\[
\left\vert x(t)\right\vert \leq\left\vert x_{0}\right\vert e_{\ominus\xi
}(t,t_{0})+\int_{t_{0}}^{t}\sum_{i=0}^{r}q_{i}(s)\left\vert x(\delta_{-}%
(h_{i},s))\right\vert ^{\ell}e_{\ominus\xi}(t,\sigma(s))\Delta s.
\]
Let the function $y$ be defined as follows:%
\[
y(t)=\left\vert x(t)\right\vert \text{ for }t\in\lbrack\delta_{-}(h_{r}%
,t_{0}),t_{0}]_{\mathbb{T}}%
\]
and%
\[
y(t)=\left\vert x_{0}\right\vert e_{\ominus\xi}(t,t_{0})+\int_{t_{0}}^{t}%
\sum_{i=0}^{r}q_{i}(s)\left\vert x(\delta_{-}(h_{i},s))\right\vert ^{\ell
}e_{\ominus\xi}(t,\sigma(s))\Delta s\text{ for }[t_{0},\infty)_{\mathbb{T}}.
\]
Then we have $\left\vert x(t)\right\vert \leq y(t)$ for all $t\in\lbrack
\delta_{-}(h_{r},t_{0}),\infty)_{\mathbb{T}}$. By \cite[Theorem 1.117]{book}
we get that%
\begin{align*}
y^{\Delta}(t)  &  =-p(t)\left(  \left\vert x_{0}\right\vert e_{\ominus\xi
}(t,t_{0})+\int_{t_{0}}^{t}\sum_{i=0}^{r}q_{i}(s)\left\vert x(\delta_{-}%
(h_{i},s))\right\vert ^{\ell}e_{\ominus\xi}(t,\sigma(s))\Delta s\right) \\
&  +\sum_{i=0}^{r}q_{i}(t)\left\vert x(\delta_{-}(h_{i},t))\right\vert ^{\ell
}\\
&  =-p(t)y(t)+\sum_{i=0}^{r}q_{i}(t)\left\vert x(\delta_{-}(h_{i}%
,t))\right\vert ^{\ell}\\
&  \leq-p(t)y(t)+\sum_{i=0}^{r}q_{i}(t)y^{\ell}(\delta_{-}(h_{i},t))
\end{align*}
for all $[t_{0},\infty)_{\mathbb{T}}$. Therefore, it follows from Theorem
\ref{thm1} that there exists a constant $M_{0}>1$ such that%
\[
\left\vert x(t)\right\vert \leq M_{0}e_{\lambda}(t,t_{0})\text{ for }%
t\in\lbrack t_{0},\infty)_{\mathbb{T}},
\]
where $\lambda:[t_{0},\infty)_{\mathbb{T}}\mathbb{\rightarrow(-\infty},0)$ is
a positively regressive function defined by (\ref{lambda}). The proof is complete.
\end{proof}

\begin{corollary}
Let $x$ be a function satisfying the inequality%
\begin{equation}
x^{\Delta}(t)\leq-p(t)x(t)+q(t)\max_{i=0,1,...,r}\left\{  x^{\ell}(\delta
_{-}(h_{i},t))\right\}  ,\ \ t\in\lbrack t_{0},\infty)_{\mathbb{T}}\text{,}
\label{3.3}%
\end{equation}
where $\ell\in(0,1]$ is a constant. Suppose that $p$ and $q$ are continuous
and bounded functions satisfying $1-\widetilde{\mu}(t)p(t)>0$ and
$p(t)>q(t)>0$ for all $t\in\lbrack t_{0},\infty)_{\mathbb{T}}$. Then, there
exists a constant $M_{0}>1$ such that every solution $x$ to Eq. (\ref{3.1})
satisfies
\[
\left\vert x(t)\right\vert \leq M_{0}e_{\lambda}(t,t_{0})\text{,}%
\]
where $\lambda$ is a positively regressive function chosen as in (\ref{lambda}).
\end{corollary}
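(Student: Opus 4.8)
The plan is to reduce this statement to the single-delay Halanay inequality of Theorem \ref{thm new halanay} by majorizing the finite maximum over the delay points by a continuous supremum, and to feed that theorem a suitable majorant of $|x|$ built exactly as in the proof of Theorem \ref{thm3.1}. The crucial point is that the hypothesis $p(t)>q(t)>0$ matches the hypothesis of Theorem \ref{thm new halanay} with the single delay $\tau=h_{r}$, whereas bounding the maximum crudely by a sum $\sum_{i=0}^{r}$ would only yield the much stronger condition $p(t)>(r+1)q(t)$; the supremum bound is precisely what preserves the sharp constant.

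First I would set $\xi:=\ominus(-p)=p/(1-\mu p)$ and rewrite every solution $x$ of (\ref{3.1}) in the integrated (variation-of-parameters) form (\ref{new form of 3.1}), exactly as in the proof of Theorem \ref{thm3.1}. Note that $1-\widetilde{\mu}(t)p(t)>0$ forces $1-\mu(t)p(t)>0$, so $-p\in\mathcal{R}^{+}$ and $e_{\ominus\xi}=e_{-p}>0$. Using the majorizing bound on the nonlinearity I would then define a comparison function $y$ by $y(t)=|x(t)|$ on $[\delta_{-}(h_{r},t_{0}),t_{0}]_{\mathbb{T}}$ and, for $t\in[t_{0},\infty)_{\mathbb{T}}$, by
\[
y(t)=|x_{0}|\,e_{\ominus\xi}(t,t_{0})+\int_{t_{0}}^{t}q(s)\max_{i=0,\dots,r}\bigl\{|x(\delta_{-}(h_{i},s))|^{\ell}\bigr\}\,e_{\ominus\xi}(t,\sigma(s))\,\Delta s.
\]
As in Theorem \ref{thm3.1} this gives $|x(t)|\le y(t)$ for all $t\in[\delta_{-}(h_{r},t_{0}),\infty)_{\mathbb{T}}$, and differentiating by the Leibniz rule \cite[Theorem 1.117]{book} yields $y^{\Delta}(t)=-p(t)y(t)+q(t)\max_{i}\{|x(\delta_{-}(h_{i},t))|^{\ell}\}$.

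Next, using $|x|\le y$ together with the monotonicity of $t\mapsto t^{\ell}$ on $[0,\infty)$ (so $|x(\delta_{-}(h_{i},t))|^{\ell}\le y^{\ell}(\delta_{-}(h_{i},t))$), I would obtain $y^{\Delta}(t)\le -p(t)y(t)+q(t)\max_{i}\{y^{\ell}(\delta_{-}(h_{i},t))\}$. The decisive observation is that, since $t_{0}=h_{0}<h_{1}<\dots<h_{r}$ and $\delta_{-}(\cdot,t)$ is strictly decreasing in its shift size by P.2, together with $\delta_{-}(t_{0},t)=t$ from Lemma \ref{lem pro}, all delay points satisfy $\delta_{-}(h_{r},t)\le \delta_{-}(h_{i},t)\le t$; hence
\[
\max_{i=0,\dots,r}\bigl\{y^{\ell}(\delta_{-}(h_{i},t))\bigr\}\le \sup_{s\in[\delta_{-}(h_{r},t),t]_{\mathbb{T}}}y^{\ell}(s),
\]
so that $y^{\Delta}(t)\le -p(t)y(t)+q(t)\sup_{s\in[\delta_{-}(h_{r},t),t]}y^{\ell}(s)$. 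Since $p>q>0$ and $1-\widetilde{\mu}(t)p(t)>0$, Theorem \ref{thm new halanay} applies to $y$ with $\tau=h_{r}$ and produces a positively regressive $\lambda$ and a constant $M_{0}>1$ with $y(t)\le M_{0}e_{\lambda}(t,t_{0})$; because $|x(t)|\le y(t)$, the claimed bound follows.

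The main obstacle is not the final comparison but the construction of the majorant: one must define $y$ through an integral of the \emph{known} quantity $|x(\delta_{-}(h_{i},\cdot))|^{\ell}$ rather than of $y$ itself, so as to avoid a circular definition, and then verify both $|x|\le y$ and the $\Delta$-differentiability needed to compute $y^{\Delta}$. Once $y$ is in hand, the only genuinely new ingredient is the elementary inclusion of the finite set of delay points in the single interval $[\delta_{-}(h_{r},t),t]_{\mathbb{T}}$, which is exactly what lets the sharp constant $p>q$ survive and reduces the multi-delay maximum to the single-delay supremum already handled by Theorem \ref{thm new halanay}.
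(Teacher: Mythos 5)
Your proof is correct and is essentially the derivation the paper intends: the corollary is stated without proof, but it sits between Theorem \ref{thm new halanay} and Theorem \ref{thm3.1} precisely so that one combines the majorant construction of the latter (variation of parameters, $|x|\le y$, Leibniz rule) with the observation that the maximum over the finitely many delay points $\delta_{-}(h_{i},t)\in[\delta_{-}(h_{r},t),t]_{\mathbb{T}}$ is dominated by the supremum over that interval, which lets Theorem \ref{thm new halanay} apply with $\tau=h_{r}$ and preserves the sharp condition $p>q$ (as you rightly note, a crude sum bound would not). The only discrepancy is notational: your argument produces the exponent $\widetilde{\lambda}$ of (\ref{lambda tilda}) rather than the $\lambda$ of (\ref{lambda}) cited in the statement, which appears to be a slip in the paper rather than a gap in your reasoning.
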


Similar to that of Theorem \ref{thm3.1} one may give a proof of the following
result by using Theorem \ref{thm2} instead of Theorem \ref{thm1}.

\begin{theorem}
\label{thm3.2}Let $p$ and $\beta_{i}$, $i=0,1,...,r$, are continuous functions
satisfying $1-\widetilde{\mu}(t)p(t)>0$, $\beta_{i}(t)>0$, $i=0,1,...,r$, for
all $t\in\lbrack t_{0},\infty)_{\mathbb{T}}$. Assume that $\alpha_{i}%
\in(0,\infty)$, $h_{i}\in\lbrack t_{0},\infty)_{\mathbb{T}}$, $i=0,1,...,r$,
are the scalars such that $\sum_{i=0}^{r}\alpha_{i}=1$, $h_{0}=t_{0}$,
$\delta_{-}(h_{i},t)$, $i=1,...,r$, are the delay functions on $\mathbb{T}$.
If%
\[
p(t)-%
{\displaystyle\prod\limits_{i=0}^{r}}
\beta_{i}(t)>0
\]
for all $t\in\lbrack t_{0},\infty)_{\mathbb{T}}$ and
\[
\left\vert F(t,x(t),x(\delta_{-}(h_{1},t)),...,x(\delta_{-}(h_{r}%
,t)))\right\vert \leq%
{\displaystyle\prod\limits_{i=0}^{r}}
\beta_{i}\left\vert x(\delta_{-}(h_{i},t))\right\vert ^{\alpha_{i}}%
\]
for all $(t,x(t),x(\delta_{-}(h_{1},t)),...,x(\delta_{-}(h_{r},t)))\in\lbrack
t_{0},\infty)_{\mathbb{T}}\times\mathbb{R}^{r+1}$. Then there exists a
constant $N_{0}>1$ such that%
\[
x(t)\leq N_{0}e_{\gamma}(t,t_{0})\text{ for }t\in\lbrack t_{0},\infty
)_{\mathbb{T}},
\]
where $\gamma:[t_{0},\infty)_{\mathbb{T}}\mathbb{\rightarrow(-\infty},0)$ is a
positively regressive function given by (\ref{gamma}).
\end{theorem}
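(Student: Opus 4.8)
The plan is to follow the proof of Theorem~\ref{thm3.1} almost verbatim, replacing the summation bound (\ref{3.2}) by the product bound on $F$ and invoking Theorem~\ref{thm2} in place of Theorem~\ref{thm1}. First I would set $\xi:=\ominus(-p)=\frac{p}{1-\mu p}$, multiply (\ref{3.1}) by $e_{\xi}(t,t_{0})$, and integrate from $t_{0}$ to $t$ to obtain the variation-of-constants representation
\[
x(t)=x_{0}e_{\ominus\xi}(t,t_{0})+\int_{t_{0}}^{t}F(s,x(s),x(\delta_{-}(h_{1},s)),\dots,x(\delta_{-}(h_{r},s)))e_{\ominus\xi}(t,\sigma(s))\Delta s,
\]
exactly as in the proof of Theorem~\ref{thm3.1}. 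Applying the product hypothesis on $F$ then yields
\[
|x(t)|\leq|x_{0}|e_{\ominus\xi}(t,t_{0})+\int_{t_{0}}^{t}\prod_{i=0}^{r}\beta_{i}(s)\,|x(\delta_{-}(h_{i},s))|^{\alpha_{i}}\,e_{\ominus\xi}(t,\sigma(s))\Delta s.
\]

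Next I would define the comparison function $y$ by $y(t)=|x(t)|$ on $[\delta_{-}(h_{r},t_{0}),t_{0}]_{\mathbb{T}}$ and by the right-hand side above on $[t_{0},\infty)_{\mathbb{T}}$, so that $y\geq 0$ and $|x(t)|\leq y(t)$ throughout. Differentiating $y$ with the Leibniz rule for delta integrals (\cite[Theorem 1.117]{book}) gives $y^{\Delta}(t)=-p(t)y(t)+\prod_{i=0}^{r}\beta_{i}(t)|x(\delta_{-}(h_{i},t))|^{\alpha_{i}}$, just as in Theorem~\ref{thm3.1}, and this computation simultaneously shows that $y$ is $\Delta$-differentiable, as required to invoke Theorem~\ref{thm2}. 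The one genuinely new ingredient is the passage from $|x|$ to $y$ inside the product: since each $\alpha_{i}>0$, the map $u\mapsto u^{\alpha_{i}}$ is increasing on $[0,\infty)$, and since each $\beta_{i}(t)>0$ together with $|x(\delta_{-}(h_{i},t))|\leq y(\delta_{-}(h_{i},t))$, I obtain factor-by-factor
\[
y^{\Delta}(t)\leq-p(t)y(t)+\prod_{i=0}^{r}\beta_{i}(t)\,y^{\alpha_{i}}(\delta_{-}(h_{i},t)),
\]
which is precisely inequality (\ref{newineq}) for $y$.

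Finally, the standing hypotheses $1-\widetilde{\mu}(t)p(t)>0$, $\beta_{i}(t)>0$, $\sum_{i=0}^{r}\alpha_{i}=1$, and $p(t)-\prod_{i=0}^{r}\beta_{i}(t)>0$ are exactly those demanded by Theorem~\ref{thm2}; applying that theorem to the nonnegative $\Delta$-differentiable function $y$ produces a constant $L_{0}>0$ with $y(t)\leq L_{0}e_{\gamma}(t,t_{0})$, where $\gamma$ is given by (\ref{gamma}). Combining this with $x(t)\leq|x(t)|\leq y(t)$ and choosing $N_{0}>\max\{1,L_{0}\}$ so as to absorb the initial data exactly as in Theorem~\ref{thm3.1} delivers $x(t)\leq N_{0}e_{\gamma}(t,t_{0})$ on $[t_{0},\infty)_{\mathbb{T}}$. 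I expect no serious obstacle, since the argument is structurally identical to that of Theorem~\ref{thm3.1}; the only point demanding care is confirming that substituting $y(\delta_{-}(h_{i},t))$ for $|x(\delta_{-}(h_{i},t))|$ preserves the inequality, which rests solely on the positivity of the $\beta_{i}$ and the monotonicity of $u\mapsto u^{\alpha_{i}}$ for $\alpha_{i}>0$.
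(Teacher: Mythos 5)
Your proposal is correct and follows exactly the route the paper intends: the paper gives no written proof of Theorem \ref{thm3.2}, stating only that it is obtained ``similar to that of Theorem \ref{thm3.1} \dots by using Theorem \ref{thm2} instead of Theorem \ref{thm1},'' which is precisely the argument you carried out (variation of constants, the comparison function $y$, the Leibniz rule, and monotonicity of $u\mapsto u^{\alpha_i}$ to pass from $|x|$ to $y$ inside the product). No gaps.
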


\begin{remark}
\label{rem imp 3}In the case when $\mathbb{T=Z}$, $p(t)=p$ and $q(t)=q$,
Theorem \ref{thm3.1} and Theorem \ref{thm3.2} gives \cite[Theorem 3.1]{udpin}
and \cite[Theorem 3.2]{udpin}, respectively.
\end{remark}








\end{document}